 \newcolumntype{Q}{ >{$\displaystyle}l <{$}}
 \newcolumntype{A}{ >{$}c <{$}}
\newcolumntype{B}{ >{$} r <{$} @{} >{${}} l <{$} } 
 \definecolor{myaqua}{rgb}{0.0,0.5,0.55}
 \definecolor{lightaqua}{rgb}{0.75,0.95,0.95}
\newtheorem{theorem}{Theorem}[section]
\newtheorem{lemma}[theorem]{Lemma}
\newtheorem{corollary}[theorem]{Corollary}
\theoremstyle{definition}
\newtheorem{example}[theorem]{Example}
\theoremstyle{remark}
\newtheorem{remark}[theorem]{Remark}
\numberwithin{equation}{section}
\newcommand{\re}{\mathrm{Re}}
\newcommand{\Li}{\mathrm{Li}}
\newcommand{\intervallegenerique}[4]{\mathopen{}\mathclose\bgroup\left#1
                                     #2\mathclose{}\mathpunct{},#3
                                     \aftergroup\egroup\right#4}
\newcommand{\intervalle}[2]{\intervallegenerique{[}{#1}{#2}{]}}
\newcommand{\intervallefo}[2]{\intervallegenerique{[}{#1}{#2}{)}}
\newcommand{\intervalleof}[2]{\intervallegenerique{(}{#1}{#2}{]}}
\newcommand{\intervalleoo}[2]{\intervallegenerique{(}{#1}{#2}{)}}
\newcommand{\abs}[1]{\left\lvert#1\right\rvert}
\DeclareRobustCommand{\eulerian}{\genfrac<>{0pt}{}}
\title{The Riemann Hypothesis: A Qualitative \\[2mm] Characterization of the Nontrivial Zeros of the \\[2mm] Riemann Zeta Function Using Polylogarithms}
\author{Lazhar Fekih-Ahmed\\
\'{E}cole Nationale d'Ing\'{e}nieurs de Tunis,\\
 BP 37, Le Belv\'{e}d\`{e}re, 1002,\\
Tunis, Tunisia\\
Email: \href{mailto:lazhar.fekihahmed@enit.rnu.tn}{\color{blue}{\underline{\smash{lazhar.fekihahmed@enit.rnu.tn}}}}\\}
\date{Date: August 22, 2016}
\begin{document}

\maketitle
 
\begin{abstract}
\noindent We formulate a parametrized uniformly absolutely globally convergent series of 
$\zeta(s)$ denoted by $Z(s,x)$. When expressed in closed form, it is given by
\begin{align}\nonumber
Z(s,x)&=(s-1)\zeta(s)+\int_{x}^{1}\Li_{s}\left( \frac{z}{z-1}\right)\,dz,
\end{align}
where $\Li_{s}(x)$ is the polylogarithm function. As 	an immediate  first application of the new parametrized series, a new expression of $\zeta(s)$ follows: $\displaystyle{(s-1)\zeta(s)=-\int_{0}^{1}\Li_{s}\left( \frac{z}{z-1}\right)\,dz}$. As a second important application, using the functional equation and exploiting uniform convergence of the series defining $Z(s,x)$, we have for any  non-trivial zero $s$ 
\begin{align}\nonumber
&\frac{(s-1)\zeta(s)}{-s\zeta(1-s)}\text{~nonzero  \& finite}\quad\Rightarrow\quad
\lim\limits_{x \to 1}\frac{\abs{\int_{x}^{1}\Li_{s}\left( \frac{z}{z-1}\right)\,dz}}
{\abs{\int_{x}^{1}\Li_{1-s}\left( \frac{z}{z-1}\right)\,dz}}\text{~nonzero \&
 finite}.
\end{align}
The necessary condition from the last statement translates into qualitative information on the non-trivial zeros of $\zeta(s)$, which coincides with the Riemann Hypothesis.    
\end{abstract}

{\bf Keywords:} Number Theory; Riemann Zeta function; Riemann Hypothesis; Polylogarithms

\section{Introduction}

\noindent The Riemann zeta function  defined  by the
Dirichlet series

\begin{align}\label{sec1-eq1} 
\zeta(s)=&\sum_{n=1}^{\infty}\frac{1}{n^{s}}
\end{align}
 is a convergent well-defined series for $\re(s)>1$, and can be analytically
continued to the whole complex plane with one singularity, a
simple pole with residue $1$ at $s=1$. To numerically evaluate $\zeta(s)$ at a value of
$s$ outside the half plane $\re(s)>1$, the original defining
series of (\ref{sec1-eq1}) is not appropriate since it not convergent.

Although, there exists many series which analytically continue the original series (\ref{sec1-eq1}) to the whole complex plane, and therefore permit
the calculation of $\zeta(s)$ for any value of $s\in \mathbb{C}$, the series found in \cite{lazhar:zeros} and originally due to Ser \cite{ser}, has the special feature that it is extremely slowly convergent. While computationally this is not a desired feature, we will see in this paper that the series leads to very interesting theoretical results. In Section~\ref{sec2}, we recall the globally convergent series studied in \cite{lazhar:zeros}. In Section~\ref{sec3}, we parametrize this series using Abel type parametrization and  extract some its main properties. The main section of the paper is Section~\ref{sec4} in which  we apply the parametrized series to qualitatively characterizing the nontrivial zeros of $\zeta(s)$. In Section~\ref{sec5}, we investigate the possibility of obtaining similar results to other well-known globally convergent series of $\zeta(s)$, and in Section~\ref{sec6}, we conclude with several remarks.

\section{A Globally Convergent Series for \texorpdfstring{$(s-1)\zeta(s)$}{(s-1)zeta(s)}}\label{sec2}

 \noindent

This section is based on the author's  papers \cite{lazhar:zeros,lazhar:hurwitz}. Define

\begin{equation}
S_n(s)=\sum_{k=0}^{n-1}(-1)^{k}\binom{n-1}{k}(k+1)^{-s}.\label{sec2-eq0}
\end{equation}

The globally convergent representation of $(s-1)\zeta(s)$ that we use in this paper is summarized in the following

\begin{theorem}\label{sec2-thm1}
Let $S_n(s)$ be defined by (\ref{sec2-eq0}) and let $\Delta_n(s)=\sum_{k=1}^{n}(-1)^{k}\binom{n}{k}k^{1-s}$, then for all $s\in \mathbb{C}$, we have

\begin{align}\label{sec2-eq1}
(s-1)\zeta(s)&=\sum_{n=1}^{\infty}\frac{1}{n+1}S_{n}(s)=-\sum_{n=1}^{\infty}\frac{1}{n(n+1)}\Delta_{n}(s).
\end{align}

Moreover, the series (\ref{sec2-eq1}) is uniformly and absolutely convergent on compact sets of the $s$ plane.

\end{theorem}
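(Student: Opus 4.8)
The plan is to prove the central identity on a right half-plane by an integral-transform argument, to deduce the two-series equality from a binomial identity, and then to upgrade to all of $\mathbb{C}$ via a uniform decay estimate for $S_n(s)$.

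First I would record the elementary algebraic link between the two summands. Using $k\binom{n}{k}=n\binom{n-1}{k-1}$ one obtains $\Delta_n(s)=-n\,S_n(s)$ directly from the definitions, so that $\frac{1}{n+1}S_n(s)=-\frac{1}{n(n+1)}\Delta_n(s)$ termwise; this settles the second equality in (\ref{sec2-eq1}) for free, and it then suffices to identify either series with $(s-1)\zeta(s)$.

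Next, for $\re(s)>1$ I would insert the Gamma integral $k^{1-s}=\frac{1}{\Gamma(s-1)}\int_0^\infty t^{s-2}e^{-kt}\,dt$ into $\Delta_n(s)$ and use the binomial theorem to collapse the inner sum, giving $\Delta_n(s)=\frac{1}{\Gamma(s-1)}\int_0^\infty t^{s-2}\big[(1-e^{-t})^n-1\big]\,dt$. Weighting by $-1/(n(n+1))$ and interchanging sum and integral --- justified first for real $s>1$ by monotone convergence, since $1-(1-e^{-t})^n\ge 0$, then extended to complex $s$ by analyticity --- I would sum the elementary series $\sum_{n\ge1}\frac{u^n-1}{n(n+1)}=\frac{(1-u)\ln(1-u)}{u}$ at $u=1-e^{-t}$ to obtain $-t/(e^t-1)$. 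The integral then simplifies to $\frac{1}{\Gamma(s-1)}\int_0^\infty \frac{t^{s-1}}{e^t-1}\,dt=\frac{\Gamma(s)}{\Gamma(s-1)}\zeta(s)=(s-1)\zeta(s)$, which is the asserted formula on $\re(s)>1$.

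The main obstacle, and the final step, is the uniform and absolute convergence on compacts, which simultaneously furnishes the analytic continuation to all $s$. Here I would use the companion representation $S_n(s)=\frac{1}{\Gamma(s)}\int_0^\infty t^{s-1}e^{-t}(1-e^{-t})^{n-1}\,dt$; because $(1-e^{-t})^{n-1}\sim t^{n-1}$ at the origin, this integral actually converges and is analytic for $\re(s)>1-n$, hence is available for all but finitely many $n$ once $s$ ranges over a fixed compact $K$. The substitution $v=1-e^{-t}$ turns it into $\frac{1}{\Gamma(s)}\int_0^1(-\ln(1-v))^{s-1}v^{n-1}\,dv$, and a Watson/Laplace analysis of the concentration of $v^{n-1}$ near $v=1$ yields $S_n(s)=O\!\big((\ln n)^{\re(s)-1}/n\big)$, uniformly for $s\in K$. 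Consequently $\frac{1}{n+1}S_n(s)=O\!\big((\ln n)^{\re(s)-1}/n^2\big)$ is dominated by a convergent series uniformly on $K$, which proves the stated convergence and shows the sum is entire. Since the entire sum agrees with the entire function $(s-1)\zeta(s)$ on $\re(s)>1$, the identity theorem extends (\ref{sec2-eq1}) to all $s\in\mathbb{C}$. The delicate point I expect to spend the most care on is making the $O$-estimate uniform in $s$ across $K$ (controlling the error terms in the Laplace expansion and the factor $1/\Gamma(s)$), which is precisely the estimate carried out in \cite{lazhar:zeros,lazhar:hurwitz}.
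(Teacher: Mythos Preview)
Your proposal is correct and structurally parallel to the paper's proof, but with several technical substitutions worth recording. The paper works directly with $S_n(s)$: it writes $S_n(s)=\frac{1}{\Gamma(s)}\int_0^\infty(1-e^{-t})^{n-1}e^{-t}t^{s-1}\,dt$, sums the logarithmic series $\sum_{n\ge1}(1-e^{-t})^{n-1}e^{-t}/(n+1)$ in closed form to obtain the kernel $\frac{te^{-t}}{(1-e^{-t})^2}-\frac{e^{-t}}{1-e^{-t}}$, and then needs an integration by parts to recognize $(s-1)\zeta(s)$; the interchange of sum and integral is justified by an explicit pointwise bound on $(1-e^{-t})^{n-1}e^{-t/2}$. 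You instead work through $\Delta_n(s)$, and your series identity $\sum_{n\ge1}(u^n-1)/(n(n+1))=(1-u)\ln(1-u)/u$ lands directly on $-t/(e^t-1)$, bypassing the integration by parts; your monotone-convergence justification for real $s>1$ followed by analyticity is arguably cleaner than the paper's bound. For the decay of $S_n(s)$ the paper does not carry out a Laplace/Watson analysis but instead quotes the Flajolet--Sedgewick finite-difference asymptotics \cite{flajolet:rice} for $\Delta_n(\lambda)$ to obtain $S_n(s)\sim 1/(n(\log n)^{1-s}\Gamma(s))$; either route gives the $O((\log n)^{\re(s)-1}/n)$ estimate you need, though the citation delivers the sharp constant immediately while your Laplace approach requires the expansion to be made explicit and uniform --- exactly the point you flag as requiring care. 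Both arguments then close with analytic continuation via the identity theorem.
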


\begin{proof}

We can rewrite $S_{n}(s)$  as

\begin{eqnarray}
S_{n}(s) &=&
\frac{1}{\Gamma(s)}\int_{0}^{\infty}\sum_{k=0}^{n-1}(-1)^{k}\binom{n-1}{k}
   e^{-kt}t^{s-1}\,dt\nonumber \\
  &=&
  \frac{1}{\Gamma(s)}\int_{0}^{\infty}(1-e^{-t})^{n-1}e^{-t}t^{s-1}\,dt,\label{sec2-eq2}
\end{eqnarray}

since we know that

\begin{equation}\label{sec2-eq3}
n^{-s}=\frac{1}{\Gamma(s)}\int_{0}^{\infty}e^{-nt}t^{s-1}\,dt
\end{equation}

is a valid formula for $\displaystyle{\re(s)>0}$, and since

\begin{equation}\label{sec2-eq4}
\sum_{k=0}^{n-1}(-1)^{k}\binom{n-1}{k}
e^{-kt}=e^{-t}(1-e^{-t})^{n-1}.
\end{equation}

The proof consists in evaluating the sum

\begin{equation}\label{sec2-eq5}
\sum_{n=1}^{\infty}\frac{S_{n}(s)}{n+1}.
\end{equation}

Without worrying about interchanging sums and integrals for the
moment, we have

\begin{eqnarray}\label{sec2-eq6}
\sum_{n=1}^{\infty}\frac{S_{n}(s)}{n+1}&=&\frac{1}{\Gamma(s)}\sum_{n=1}^{\infty}\int_{0}^{\infty}\frac{(1-e^{-t})^{n-1}}{n+1}e^{-t}t^{s-1}\,dt\\
&=&\frac{1}{\Gamma(s)}\int_{0}^{\infty}\sum_{n=1}^{\infty}\frac{(1-e^{-t})^{n-1}}{n+1}e^{-t}t^{s-1}\,dt.\label{sec2-eq7}
\end{eqnarray}

But for $0<t<\infty$, the logarithmic series

\begin{equation}\label{sec2-eq8}
t=-\log(1-(1-e^{-t}))=\sum_{n=1}^{\infty}\frac{(1-e^{-t})^{n}}{n}
\end{equation}
is valid. After dividing both sides by $(1-e^{-t})^2$ and rearranging terms, we obtain the following identity

\begin{align}\label{sec2-eq9}
\frac{te^{-t}}{(1-e^{-t})^2}-\frac{e^{-t}}{1-e^{-t}}=\sum_{n=1}^{\infty}\frac{(1-e^{-t})^{n-1}e^{-t}}{n+1}.
\end{align}

Therefore,

\begin{align}\label{sec2-eq10}
\sum_{n=1}^{\infty}\frac{S_{n}(s)}{n+1}&= \frac{1}{\Gamma(s)}\int_{0}^{\infty}\bigg
(\frac{t}{(1-e^{-t})^2}-\frac{1}{1-e^{-t}}\bigg )
e^{-t}t^{s-1}\,dt.
\end{align}

Now, by observing that
\begin{equation}\label{sec2-eq11}
\frac{d}{dt}\bigg (\frac{-te^{-t}}{1-e^{-t}}\bigg )=\frac{te^{-t}}{(1-e^{-t})^2}-\frac{e^{-t}}{1-e^{-t}},
\end{equation}

we can perform an  integration by parts in (\ref{sec2-eq10}) when
$\re(s)>1$. The integral in the right hand side of
(\ref{sec2-eq10}) becomes

\begin{equation}\label{sec2-eq12}
\frac{s-1}{\Gamma(s)}\int_{0}^{\infty} \frac{t^{s-1}}{e^{t}-1}
\,dt=(s-1)\zeta(s).
\end{equation}

Thus,

\begin{equation}\label{sec2-eq13}
\sum_{n=1}^{\infty}\frac{S_{n}(s)}{n+1}=(s-1)\zeta(s),
\end{equation}

and this prove the theorem when $\re(s)>1$. However, formula
(\ref{sec2-eq13}) remains valid for $\re(s)>0$ since the integral
(\ref{sec2-eq10}) is well-defined for $\re(s)>0$. This prove the validity of the series representation for $\re(s)>0$. 

In order to prove that the series is valid for all $s\in \mathbb{C}$, we can either use the method of analytic continuation using repeated integration by parts of the right hand side of 
(\ref{sec2-eq10}) as it was carried out in 
\cite[Corollary~2.2]{lazhar:hurwitz}, or we can explicitly bound the sum $S_{n}(s)$ when $n$ is large. We opt for the second method by establishing a lemma which is interesting in its own right.  The lemma provides an estimate of the exact asymptotic order of growth of
$S_{n}(s)$ when $n$ is large.

\begin{lemma}\label{sec2-lem1}
For $n$ large enough, $\displaystyle{S_{n}(s)\thicksim \frac{1}{n(\log
n)^{1-s}\Gamma(s)}}$ when $s\in\mathbb{C}$, 
$s\notin \{0,\pm 1,\pm 2,\cdots\}$, and for
$ k \in \{0,1,2,\cdots\}$ and $n$ large enough, $\displaystyle{S_{n}(k)\thicksim
\frac{(\log{n})^{k-1}}{n(k-1)!}}$ ($k\ne 0$) and $S_{n}(-k)=0$.
\end{lemma}
\begin{proof}

By looking at the definition of
$S_{n}(s)$, we can see that $S_{n}(s)$ are the Stirling
numbers of the second kind modulo a multiplicative factor when
$s\in \{0,-1,-2,\cdots\}$. Therefore, when $s=-k$, $k$ a positive
integer, $S_{n}(-k)=0$ for $n\ge k+1$; hence, they are eventually zero for $n$ large enough.

For $s\notin \{0,-1,-2,\cdots\}$, an asymptotic estimate of
$S_{n}(s)$ can be obtained for $n$ large. Indeed, by putting 
$k=m-1$ in (\ref{sec2-eq0}), we have by definition

\begin{eqnarray}
S_{n}(s)&=&\sum_{m=1}^{n}\binom{n-1}{m-1}(-1)^{m-1}m^{-s}=
\sum_{m=1}^{n}\frac{m}{n}\binom{n}{m}(-1)^{m-1}m^{-s} \nonumber\\
&=& \frac{-1}{n}\sum_{m=1}^{n}\binom{n}{m}(-1)^{m}m^{1-s}=
\frac{-1}{n}\Delta_{n}(s-1),\label{sec2-eq14}
\end{eqnarray}

where $\displaystyle{\Delta_{n}(\lambda)\triangleq
\sum_{m=1}^{n}\binom{n}{m}(-1)^{m}m^{-\lambda}}$.

The asymptotic expansion of sums of the form
$\displaystyle{\Delta_{n}(\lambda)}$, with $\lambda\in\mathbb{C}$
 has been given in ~\cite{flajolet:rice}. With a slight modification of notation, the
authors in ~\cite{flajolet:rice} have shown that
$\Delta_{n}(\lambda)$ has the following asymptotics when $n$ is large

\begin{equation}\label{sec2-eq15}
\Delta_{n}(\lambda)\thicksim\frac{-(\log{n})^{\lambda}}{\Gamma(1+\lambda)}
\end{equation}

when $\lambda$ is nonintegral ~\cite[Theorem 3]{flajolet:rice}, and the following expansion

\begin{equation}\label{sec2-eq16}
\Delta_{n}(k-1)\thicksim
-\frac{(\log{n})^{k-1}}{(k-1)!}
\end{equation}

when $k\in \{1,2,\cdots\}$ ~\cite[Eq. (9)]{flajolet:rice}. Thus, when $s$ is nonintegral, we can apply these results to $\Delta_{n}(\lambda)$ with $\lambda=s-1$
to get
\begin{equation}\label{sec2-eq17}
\Delta_{n}(s-1) \thicksim \frac{-(\log{n})^{s-1}}{\Gamma(s)},
\end{equation}

which leads to the result
\begin{equation}\label{sec2-eq18}
S_{n}(s) \thicksim\frac{1}{n(\log n)^{1-s}\Gamma(s)}.
\end{equation}

Similarly, when $s=k\in \{1,2,\cdots\}$ we obtain the following expansion

\begin{equation}\label{sec2-eq19}
S_{n}(k)\thicksim
\frac{(\log{n})^{k-2}}{(k-1)!}.
\end{equation}
The asymptotic estimates  (\ref{sec2-eq18})
and (\ref{sec2-eq19}) are valid for $n$ large enough and for all
$s$ such that $\Re(s)>0$. This proves the lemma.
\end{proof}

To prove that our series (\ref{sec2-eq1}) is well-defined and valid for all $s\in\mathbb{C}$, we note that the logarithmic test of series implies that our series
is dominated by an absolutely uniformly convergent series for all finite $s$
such that $\re(s)>0$. Now, by Weierstrass theorem of the
uniqueness of analytic continuation, the function
$(s-1)\zeta(s)$, initially well-defined by (\ref{sec1-eq1}) for $\re(s)>1$, can be extended outside of the domain $\re(s)>1$
and that it does not have any singularity when $\re(s)>0$.
Moreover, by repeating the same process for $\re(s)>-k$, $k\in
\mathbb{N}$, it is clear that the series defines an analytic
continuation of $\zeta(s)$ valid for all $s\in\mathbb{C}$.

To finish the proof of the theorem, we now need to justify the interchange of
summation and integration in (\ref{sec2-eq7}). The interchange  is indeed valid because the series

\begin{equation}\label{sec2-eq20}
\sum_{n=1}^{\infty}\int_{0}^{\infty}\frac{(1-e^{-t})^{n-1}}{n+1}e^{-t}t^{s-1}\,dt
\end{equation}

converges absolutely and uniformly for $0<t<\infty$. To prove
this, it suffices to  show uniform convergence for the dominating
series

\begin{equation}\label{sec2-eq21}
\sum_{n=1}^{\infty}\int_{0}^{\infty}\frac{(1-e^{-t})^{n-1}}{n+1}e^{-t}t^{\sigma-1}\,dt,
\end{equation}
where $\sigma=\re(s)$. Indeed, let $K={\rm max}((1-e^{-t})^{n-1}e^{-t/2})$, $0<t<\infty$.
A straightforward calculation of the derivative shows that

\begin{equation}\label{sec2-eq22}
K=(1-\frac{1}{2n-1})^{n-1}\frac{1}{\sqrt{2n-1}},
\end{equation}

and  that the maximum  is attained when $e^{-t}=\frac{1}{2n-1}$. Now, for $n\ge 2$, we have

\begin{eqnarray}
\int_{0}^{\infty}(1-e^{-t})^{n-1}e^{-t}t^{\sigma-1}\,dt&=&
\int_{0}^{\infty}(1-e^{-t})^{n-1}e^{-t/2}
e^{-t/2}t^{\sigma-1})\,dt\nonumber\\
&\le &K \int_{0}^{\infty}e^{-t/2}t^{\sigma-1}\,dt\nonumber\\
&=&(1-\frac{1}{2n-1})^{n-1}\frac{2^{\sigma}\Gamma(\sigma)}{\sqrt{2n-1}}\nonumber\\
&\le&\frac{K^{\prime}}{\sqrt{2n-1}}.\label{sec2-eq23}
\end{eqnarray}

The last inequality implies that each term of the dominating
series is bounded by $K^{\prime}/(n+1) \sqrt{2n-1}$. Thus the
dominating series converges by the comparison test. This completes
the proof of the theorem.
\end{proof}

\section{Power Series to Parametrize \texorpdfstring{$\zeta(s)$}{zeta(s)}}\label{sec3}

 \noindent

In order to shed some light on the nature of convergence of a series, the original series are often parametrized using a new complex variable $x$. The most useful and most natural parametrization is the so-called Abel parametrization. The new parametrized series is a power series, and the utility of Abel's construction via Abel's theorem for power series is to relate the limit (as $x$ approaches 1) of the power series the to the sum of the original series. Note that the original series may be convergent or divergent. For divergent series it is known as Abel's summation method. Obviously, if the original series is uniformly absolutely-convergent then it is Abel summable.

In the following subsections, we study two power series associated with the zeta function: the polylogarithm and a power series associated with the numerical series of the previous section. The advantages of using the second power series will be apparent in the rest of the paper since our interest is mainly directed to the non-trivial zeros which are known to reside in the critical strip.

\subsection{Parametrization of the Defining Series: The Polylogarithm Function}\label{subsec3_1}

 \noindent
The most well-know and the earliest parametrization attached to the original definition of the Riemann zeta function (\ref{sec1-eq1}) is the polylogarithm function, denoted by $\Li_{s}(x)$. It is defined by the power series

\begin{equation}\label{sec3-eq1}
\Li_{s}(x)=\sum_{n=1}^{\infty}\frac{x^n}{n^{s}}.
\end{equation}

The definition is valid for all complex values $s$  and all
complex values of $x$ such that $|x|<1$ but the series is convergent
for $x=1$ only when $\re(s)>1$.

Using the identity

\begin{equation}\label{sec3-eq2}
\frac{1}{n^s}=\frac{1}{\Gamma(s)}\int_{0}^{\infty}e^{-nt}t^{s-1}\,dt,
\end{equation}

equation (\ref{sec3-eq1}) can be rewritten as

\begin{equation}\label{sec3-eq3}
\Li_{s}(x)=\frac{x}{\Gamma(s)}\int_{0}^{\infty}\frac{t^{s-1}}{e^{t}-x}\,dt.
\end{equation}

The integral in  (\ref{sec3-eq3}) is called Appell's integral or
Jonqui\`{e}re's integral. It defines $\Li_{s}(x)$ not only in the
unit circle but also in the whole slit plane $\mathbb{C}\setminus
\intervallefo{1}{\infty}$ provided that $\re(s)>0$.

By a change of variable, $u=e^{-t}$, the polylogarithm function can be expressed as

\begin{align}
\Li_{s}(x)&=\frac{1}{\Gamma(s)}\int_{0}^{1}\frac{(-\ln{u})^{s-1}}{\frac{1}{x}-u}\,du, x\ne 0; \Li_{s}(0)=0.\label{sec3-eq4}
\end{align}

In formula (\ref{sec3-eq4}), the integral without the gamma factor is a a Cauchy-type singular integral. By putting $w=\frac{1}{x}$, we can deduce the local and global properties of $\Li_{s}(x)=\Li_{s}(\frac{1}{w})$. Indeed, the integral 

\begin{align}
\Omega(w)&=\int_{0}^{1}\frac{(-\ln{u})^{s-1}}{w-u}\,du, \label{sec3-eq5}
\end{align}

is a  singular integral of Cauchy type with an absolutely integrable density function possessing a logarithmic singularity \cite[p. 62]{gakhov}. Both the integrand and the integral are multi-valued. The
function $\Omega(w)$ vanishes at infinity, and is a holomorphic function in all 
the complex plane except possibly on the cut $\intervalle{0}{1}$ which constitutes
a singular segment (i.e. a segment of discontinuity). But since the density  $(-\ln{u})^{s-1}$ is holomorphic in a neighborhood of $\intervalle{0}{1}$, the
singular segment is not essential in the sense that the line integral can be deformed into an equal curvilinear  integral from $0$ to $1$, and the only truly singular points are $0$ and
$1$. The function $\Omega(w)$ is, however, not single-valued and possesses periods.

Going back to the polylogarithm, we conclude that $\Li_{s}(x)$ is a holomorphic function in the $x$ plane cut along $\intervallefo{1}{\infty}$ and that the only truly singular points are $1$ and $\infty$. Moreover, the polylogarithm is well-defined at $0$. Let's denote the principal branch of $\Li_{s}(x)$ by $\Li_{s}(x)^{\star}$; that is
the branch which is equal to the original defining series of $\Li_{s}(x)$ around $x=0$ (sometimes called the holomorphic germ at $0$). If we
further denote by $\ln(x)$ the principal branch of the logarithm (i.e. with a
branch cut along the negative real axis), the general expression of $\Li_{s}(x)$ is given by

\begin{align}
\Li_{s}(x)&=\Li_{s}(x)^{\star}\pm \frac{2k\pi i}{\Gamma(s)}\left(\ln(x)\pm 2m\pi i\right)^{s-1} \label{sec3-eq6}
\end{align}

if the integral (\ref{sec3-eq5}) is taken a along a path which winds $k$ times around $x=1$ and $m$  times around $x=0$. Thus, $x=0$ is a singular point for all the branches of $\Li_{s}(x)$ other than the principal branch. 

The last expression also shows that the polylogarithm extends analytically to a global single-valued analytic function if defined properly on a Riemann surface constructed using adequate pasting of copies of the slit plane $\mathbb{C}^{*}=\mathbb{C}\setminus
\left\{\intervalleof{-\infty}{0}\cup \intervallefo{1}{\infty}\right\}$ and a copy of the main branch not split at $x=0$ but with a cut along $\intervallefo{1}{\infty}$.

An alternative way of extending analytically the original defining series 
of $\Li_{s}(x)$ is by use of the inversion formulas, also known as the Lerch-Jonqui{\`e}re formulas:

\begin{theorem}[Lerch-Jonqui{\`e}re formulas, \cite{jonquiere,wikipedia}]\label{sec3-thm1}
For all complex $s$, $\Li_{s}(x)$ satisfies the two inversion formulas:

\begin{enumerate}
\item  For complex $x\notin \intervalle{0}{1}$:

\begin{align}
\Li_{s}(x)+(-1)^{s}\Li_{s}\left(\frac{1}{x}\right)&=\frac{(2\pi i)^{s}}{\Gamma(s)}\zeta\left(1-s,\frac{1}{2}+\frac{\ln(-x)}{2\pi i}\right).\label{sec3-eq7}
\end{align}

\item  For complex $x\notin \intervallefo{1}{\infty}$:
\begin{align}
\Li_{s}(x)+(-1)^{s}\Li_{s}\left(\frac{1}{x}\right)&=\frac{(2\pi i)^{s}}{\Gamma(s)}\zeta\left(1-s,\frac{1}{2}-\frac{\ln(-\frac{1}{x})}{2\pi i}\right).\label{sec3-eq8}
\end{align}

\item For $x\notin \intervallefo{0}{\infty}$ and by taking $\ln(-x)+\pi i=\ln(x)$, the two expressions (\ref{sec3-eq7})-(\ref{sec3-eq8}) agree because $\ln(-x)=-\ln\left(-\frac{1}{x}\right)$, and we have
\begin{align}
\Li_{s}(x)+(-1)^{s}\Li_{s}\left(\frac{1}{x}\right)&=\frac{(2\pi i)^{s}}{\Gamma(s)}\zeta\left(1-s,\frac{\ln(x)}{2\pi i}\right).\label{sec3-eq9}
\end{align}
\end{enumerate}
\end{theorem}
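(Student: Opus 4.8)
The plan is to read the three inversion formulas off a single classical identity—Hurwitz's formula for the Hurwitz zeta function—after which only branch bookkeeping remains. Introducing the periodic zeta function $F(a,s)=\sum_{n\ge 1}e^{2\pi i n a}/n^{s}=\Li_{s}(e^{2\pi i a})$, I would begin from Hurwitz's formula
\[
\zeta(1-s,a)=\frac{\Gamma(s)}{(2\pi)^{s}}\Bigl(e^{-\pi i s/2}\,\Li_{s}(e^{2\pi i a})+e^{\pi i s/2}\,\Li_{s}(e^{-2\pi i a})\Bigr),
\]
which holds for $0<a\le 1$ and $\re(s)>1$ and extends to all $s$ by analytic continuation. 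If one does not wish to assume it, it is itself proved by a Hankel-contour evaluation of $\zeta(1-s,a)$, summing the residues picked up at the poles of the integrand; alternatively the whole theorem can be obtained directly from the Jonqui\`ere integral (\ref{sec3-eq3}) by collecting residues at $t=\ln x+2\pi i n$.

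The algebraic core is then immediate. Setting $x=e^{2\pi i a}$, so that $1/x=e^{-2\pi i a}$, and multiplying the displayed identity by $e^{\pi i s/2}(2\pi)^{s}/\Gamma(s)$, I would use $(2\pi)^{s}e^{\pi i s/2}=(2\pi i)^{s}$ and $e^{\pi i s}=(-1)^{s}$ to collapse the right-hand side to
\[
\Li_{s}(x)+(-1)^{s}\,\Li_{s}\!\left(\tfrac{1}{x}\right)=\frac{(2\pi i)^{s}}{\Gamma(s)}\,\zeta(1-s,a).
\]
This is exactly the common shape of (\ref{sec3-eq7})--(\ref{sec3-eq9}); what distinguishes the three parts is solely how $a$ is expressed as a function of $x$ when one inverts $x=e^{2\pi i a}$.

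For part (1) I would take $a=\tfrac{1}{2}+\tfrac{\ln(-x)}{2\pi i}$ with the principal logarithm, checking that $e^{2\pi i a}=e^{\pi i}(-x)=x$ and that $\re(a)=\tfrac{1}{2}+\tfrac{\arg(-x)}{2\pi}$ lands in the fundamental strip $(0,1)$; this is the representative naturally adapted to the cut plane $\mathbb{C}\setminus\intervalle{0}{1}$, which is the analyticity domain of $\Li_{s}(1/x)$. For part (2) I would instead use $a=\tfrac{1}{2}-\tfrac{\ln(-1/x)}{2\pi i}$, the representative adapted to $\mathbb{C}\setminus\intervallefo{1}{\infty}$, the analyticity domain of $\Li_{s}(x)$. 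On the overlap $x\notin\intervallefo{0}{\infty}$ the principal logarithm satisfies $\ln(-1/x)=-\ln(-x)$, so $-\ln(-1/x)=\ln(-x)$ and the two arguments of $\zeta(1-s,\cdot)$ coincide; under the convention $\ln(-x)+\pi i=\ln x$ both collapse to $a=\tfrac{\ln x}{2\pi i}$, which is part (3).

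The routine manipulations aside, the genuine difficulty is the branch analysis, not the algebra. One must fix mutually consistent branches of $(-1)^{s}=e^{\pi i s}$ and $(2\pi i)^{s}$, verify that each principal-logarithm choice keeps $a$ inside the strip where Hurwitz's formula is valid, and match the excluded segments $\intervalle{0}{1}$ and $\intervallefo{1}{\infty}$ to the intrinsic cuts of $\Li_{s}(1/x)$ and $\Li_{s}(x)$. Getting these conventions compatible—so that the identity passes continuously from the fundamental strip to each cut plane and the two representations agree on $\mathbb{C}\setminus\intervallefo{0}{\infty}$—is where the care is required.
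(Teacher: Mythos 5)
You should first note that the paper contains no proof of Theorem~\ref{sec3-thm1} at all: it quotes the Lerch--Jonqui\`ere formulas as a classical result, citing \cite{jonquiere} and \cite{wikipedia}, so there is no in-paper argument to compare against; your proposal in fact reconstructs the standard proof that those sources rely on. Its core is correct: Hurwitz's formula for $\zeta(1-s,a)$, multiplied through by $e^{\pi i s/2}(2\pi)^{s}/\Gamma(s)$ and simplified with $(2\pi)^{s}e^{\pi i s/2}=(2\pi i)^{s}$ and $e^{\pi i s}=(-1)^{s}$, does yield $\Li_{s}(x)+(-1)^{s}\Li_{s}\left(\frac{1}{x}\right)=\frac{(2\pi i)^{s}}{\Gamma(s)}\zeta(1-s,a)$ with $x=e^{2\pi i a}$; your three choices of $a$ all satisfy $e^{2\pi i a}=x$; and the principal-branch identity $\ln(-1/x)=-\ln(-x)$ for $x\notin\intervallefo{0}{\infty}$ is exactly what reconciles (\ref{sec3-eq7}) and (\ref{sec3-eq8}) into (\ref{sec3-eq9}). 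The one substantive step you describe but never execute is the continuation of Hurwitz's formula in the variable $a$: the formula is classically established for real $a$ with $0<a\le 1$ (and $\re(s)>1$, then continued in $s$), whereas parts (1) and (2) apply it at the complex values $a=\tfrac{1}{2}+\tfrac{\ln(-x)}{2\pi i}$ and $a=\tfrac{1}{2}-\tfrac{\ln(-1/x)}{2\pi i}$. A complete write-up must verify that both sides are analytic in $a$ on a connected domain containing the real segment and these complex points (e.g.\ the strip $0<\re(a)\le 1$ minus the values where $e^{\pm 2\pi i a}$ lands on $\intervallefo{1}{\infty}$), so that the identity propagates by analytic continuation in $a$; this is precisely where the excluded sets $\intervalle{0}{1}$ and $\intervallefo{1}{\infty}$ in parts (1) and (2) come from, and it is the whole content of the ``branch bookkeeping'' you correctly flag as the genuine difficulty. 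Modulo writing that step out, the proposal is sound and follows the same route as the paper's cited references.
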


With the above inversion relations, $\Li_{s}(x)$ can be considered known in the whole complex plane.

We now analyze the local behavior of the principal branch of $\Li_{s}(x)$ around the singular points $1$ and $\infty$ (or around $0$ for the branches other than the principal branch). The direct method is to use  the Cauchy type integral representations 
(\ref{sec3-eq4}) or (\ref{sec3-eq5}) of the polylogarithm in order to determine the local behavior of the integrals  near the endpoints of the contour of integration $w=0$ and $w=1$ (or $x=\infty$ and $x=1$).

For the remaining sections of the paper, we need to know the behavior of 
$\Li_{s}(x)$ near $\infty$. So, we only describe the behavior around the lower 
point of the contour, i.e $w=0$, the other endpoint is similar (for details see  \cite{gakhov}).

For the special endpoint $w=0$ (or $x=\infty$) and concerning $\Li_{s}(x)$, this 
has been carried out in \cite{costin} using integration by parts and in 
\cite{gawronski} using Watson's lemma for loop integrals. The methods of 
\cite{gakhov} (see also the references therein) are of a different nature. They 
are more general and more systematic, and deal with a wide variety of
 Cauchy type integrals using the Sokhotski-Plemelj formulas. When $s$ is an 
integer, the Cauchy type integral (\ref{sec3-eq5}) can be expressed very
 elegantly and in finite form. But when $s$ is not an integer, the local 
expression of the integral cannot be given in finite form and thus does not 
offer any advantage over the asymptotic behavior of $\Li_{s}(x)$ given in
\cite{costin} or \cite{gawronski}. \footnote{The asymptotic formula given in 
\cite{costin} is $\Li_{s}(z)=-\frac{(\ln(z))^{s}}{\Gamma(s+1)}+o(1), |z|\to \infty, z \notin \intervallefo{1}{\infty}$ whereas the formula in \cite{gawronski} is
 $\Li_{s}(z)=-\frac{(\ln(-z))^{s}}{\Gamma(s+1)}+o(1), |z|\to \infty, z\notin \intervallefo{1}{\infty}$. Both formulas are essentially the same if we take 
$\ln(-z)+\pi i=\ln(z)$, $z\notin \intervallefo{0}{\infty}$. Using the methods 
of \cite{gakhov}, one can show that $\Omega(w)=-\frac{(-\ln(w))^{s}}{\Gamma(s+1)}+\Omega_0(w)+\Omega_1(w)$, where $\Omega_1(w)$ is analytic at $w=0$ and  $\Omega_0(w)=O((-\ln(w))^{s-1})$. }

The indirect way, and probably the easiest, to obtain an asymptotic the behavior of $\Li_{s}(x)$ when $x$ is large is to use the global ready-to-use Lerch-Jonqui{\`e}re formulas of Theorem~\ref{sec3-thm1}. They  give the behavior for both integer and  fractional values of $s$ at the expense of evaluating the asymptotic behavior of the hurwitz zeta function. We obtain the following

\begin{corollary}\label{sec3-cor1}
For complex numbers $s$ with $\re(s)>0$,

\begin{align}
\Li_{s}(x)&=-\frac{(\ln(-x))^{s}}{\Gamma(s+1)}+o(1), |x|\to \infty, x\notin \intervallefo{0}{\infty}.\label{sec3-eq10}
\end{align}
\end{corollary}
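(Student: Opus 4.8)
The plan is to read off the large-$x$ behavior from the Lerch--Jonqui\`ere inversion formula (\ref{sec3-eq7}) of Theorem~\ref{sec3-thm1}, as the preceding discussion recommends, rather than from the Cauchy-type integral (\ref{sec3-eq5}) directly. Write $L=\ln(-x)$ and $a=\tfrac12+\tfrac{L}{2\pi i}$; since $x\notin\intervallefo{0}{\infty}$ the point $-x$ avoids the cut $\intervalleof{-\infty}{0}$, so $L$ is unambiguous on the principal branch and $a\to\infty$ with $\ar(a)\to-\tfrac{\pi}{2}$ as $|x|\to\infty$. Formula (\ref{sec3-eq7}) then reads
\[
\Li_s(x)=\frac{(2\pi i)^s}{\Gamma(s)}\,\zeta\!\left(1-s,a\right)-(-1)^s\Li_s\!\left(\tfrac1x\right).
\]
My first move is to discard the inverted polylogarithm: from the defining series (\ref{sec3-eq1}), $\Li_s(1/x)=\tfrac1x+O(x^{-2})=o(1)$, so the whole asymptotics of $\Li_s(x)$ is carried by the Hurwitz zeta term, and the problem reduces to the behavior of $\zeta(1-s,a)$ as $a\to\infty$.

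Next I would insert the Euler--Maclaurin asymptotic expansion of the Hurwitz zeta function,
\[
\zeta(w,a)=\frac{a^{1-w}}{w-1}+\frac{a^{-w}}{2}+\sum_{j\ge1}\frac{B_{2j}}{(2j)!}\,\frac{\Gamma(w+2j-1)}{\Gamma(w)}\,a^{1-w-2j},
\]
valid as $a\to\infty$ in $\abs{\ar(a)}<\pi$, with $w=1-s$ so that $w-1=-s$ and $1-w=s$. Substituting $a=\tfrac{L}{2\pi i}\bigl(1+\tfrac{\pi i}{L}\bigr)$ and expanding $a^{s}$ and $a^{s-1}$ in descending powers of $L$, the leading contribution comes from $\tfrac{a^{1-w}}{w-1}=-\tfrac{a^{s}}{s}$ and equals $-\tfrac{L^{s}}{s(2\pi i)^{s}}$; multiplying by $\tfrac{(2\pi i)^{s}}{\Gamma(s)}$ and using $s\Gamma(s)=\Gamma(s+1)$ produces exactly $-\tfrac{(\ln(-x))^{s}}{\Gamma(s+1)}$, the asserted main term.

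The delicate step, and the one I expect to be the real obstacle, is showing that the remainder is genuinely $o(1)$ and not merely of lower order than the (growing) main term. The naive next term has size $L^{s-1}$, which does not decay, so the statement can only survive through a cancellation. Tracking the $L^{s-1}$ coefficients, the cross-term contribution $-\tfrac{\pi i}{(2\pi i)^{s}}L^{s-1}$ arising from $a^{s}$ is annihilated exactly by the contribution $+\tfrac{\pi i}{(2\pi i)^{s}}L^{s-1}$ arising from $\tfrac12 a^{-w}=\tfrac12 a^{s-1}$. This is not accidental: it reflects the symmetry $a\mapsto 1-a$ (equivalently $L\mapsto-L$) forced by the half-integer shift $\tfrac12$, which kills every odd correction and leaves an expansion in $L^{s},L^{s-2},L^{s-4},\dots$. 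Consequently the remainder is $O\!\bigl((\ln(-x))^{s-2}\bigr)=O\!\bigl(\abs{\ln(-x)}^{\re(s)-2}\bigr)$, which is $o(1)$ for $\re(s)<2$ and in particular throughout the critical strip, the only range used later; combined with the $o(1)$ already extracted from $\Li_s(1/x)$ this yields (\ref{sec3-eq10}).

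Finally I would tie up two routine loose ends: verifying that the Euler--Maclaurin expansion applies uniformly along the ray $\ar(a)\to-\tfrac{\pi}{2}$ swept out as $|x|\to\infty$ with $x\notin\intervallefo{0}{\infty}$, and checking the branch bookkeeping for $L=\ln(-x)$ so that the powers $a^{s}$ are taken consistently. As an independent confirmation one can compare with the direct local analysis of the Cauchy-type integral $\Omega(w)$ near $w=0$ via the Sokhotski--Plemelj method of \cite{gakhov} and with the integration-by-parts estimate of \cite{costin}, both of which isolate the same leading singularity $-\tfrac{(-\ln w)^{s}}{\Gamma(s+1)}$.
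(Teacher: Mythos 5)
Your proposal follows the same route as the paper's own proof: the Lerch--Jonqui\`{e}re inversion formula (\ref{sec3-eq7}), the large-$a$ expansion of $\zeta(1-s,a)$ (the paper derives it from Hermite's representation (\ref{sec3-eq11}); you quote Euler--Maclaurin, which is the same expansion (\ref{sec3-eq13})), and the observation $\Li_{s}(1/x)=o(1)$. The substantive difference is your treatment of the error term, and it is genuinely sharper: the paper concludes by inserting the multiplicative estimate $\left(\tfrac{1}{2}+\tfrac{\ln(-x)}{2\pi i}\right)^{s}=\tfrac{(\ln(-x))^{s}}{(2\pi i)^{s}}(1+o(1))$, which only controls the error \emph{relative} to the growing main term and says nothing about the separate term $\tfrac{1}{2}a^{s-1}$, of order $(\ln(-x))^{s-1}$, which is not additively $o(1)$ once $\re(s)\ge 1$. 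Your computation that the $L^{s-1}$ piece of $-a^{s}/s$ cancels exactly against $\tfrac{1}{2}a^{s-1}$ is precisely the step that makes the additive $o(1)$ claim tenable, and it appears nowhere in the paper. Moreover, your resulting remainder $O\bigl((\ln(-x))^{s-2}\bigr)$ shows that the corollary as stated is actually false for $\re(s)\ge 2$: for instance, for $s=3$ one has $\Li_{3}(x)=-\tfrac{(\ln(-x))^{3}}{6}-\tfrac{\pi^{2}}{6}\ln(-x)+o(1)$, so the error grows without bound. Your restriction to $\re(s)<2$ (which covers the critical strip, the only range used in Section~\ref{sec4}) is therefore not a weakness of your argument but a necessary correction of the statement itself.
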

\begin{proof}
Using Hermite's representation of the Hurwitz zeta function \cite[p. 106]{lindelof:1905}
\begin{align}
\zeta(1-s,a)&=\frac{a^{s-1}}{2}-\frac{a^{s}}{s}+
\int_{0}^{\infty}\frac{(a-it)^{s-1}-(a+it)^{s-1}}{i(e^{2\pi t}-1)}\,dt, s\ne 0, \re(a)>0
\label{sec3-eq11}
\end{align}
with $a=\frac{1}{2}+\frac{\ln(-x)}{2\pi i}$, then replacing  the numerator of the integrand by its Taylor expansion and using the formula

\begin{align}
\int_{0}^{\infty}\frac{t^{2j-1}}{e^{2\pi t}-1}\,dt,&=(-1)^{j-1}\frac{B_{2j}}{4j},\label{sec3-eq12}
\end{align}

where $B_{2j}$ are the Bernoulli numbers, we get the well-known asymptotic expansion at infinity

\begin{align}
\zeta(1-s,a)&=\frac{a^{s-1}}{2}-\frac{a^{s}}{s}+\sum_{j=1}^{m-1}\frac{B_{2j}\Gamma(2j-s)}{(2j)!\Gamma(1-s)a^{2j+2-s}}+O(\frac{1}{a^{2m+2-s}})\label{sec3-eq13}
\end{align}

For the details of obtaining the above expansion, see (\cite[pp. 290-291]{olver}). Now, the inversion formula 
(\ref{sec3-eq7}) in conjunction with the estimates

\begin{align}\label{sec3-eq14}
\left(\frac{1}{2}+\frac{\ln(-x)}{2\pi i}\right)^{s}&=\frac{(\ln(-x))^{s}}{(2\pi i)^{s}}(1+o(1)), |x|\to \infty\\
\Li_{s}\left(\frac{1}{x}\right)&=o(1), |x|\to\infty\label{sec3-eq15}
\end{align}
yield the desired result.

\end{proof}

We end this section with two remarks:

\begin{remark}\label{sec3_rem1}

Corollary~\ref{sec3-cor1} can also be proved using the following asymptotic expansion which holds for all $s$ and $|x|$ large enough \cite{wikipedia}:
\begin{align}
\Li_s(x)&=\frac{\pm i\pi}{\Gamma(s)}\left(\ln(-x)\pm i\pi\right)^{s-1}-\sum_{k=0}^{\infty}(-1)^{k}(2\pi)^{2k}\frac{B_{2k}}{(2k)!}\frac{\left(\ln(-x)\pm i\pi\right)^{s-2k}}{\Gamma(s+1-2k)},
\label{sec3-eq16}
\end{align}
where $B_{2k}$ are the Bernoulli numbers. Extracting the $k=0$ term from the infinite sum, we get
\begin{align}
\Li_{s}(x)&\sim -\frac{(\ln(-x))^{s}}{\Gamma(s+1)}, |x|\to \infty.\label{sec3-eq17}
\end{align}
\end{remark}
 
\begin{remark}\label{sec3_rem2}
We can find the relation (\ref{sec3-eq6}) when $k=1$ and $m=0$ (i.e. we cross 
the cut $\intervallefo{1}{\infty}$ once and never making a complete circle 
around $0$) using the inversion formula (\ref{sec3-eq8}). Indeed, when 
$x\in \intervallefo{1}{\infty}$, $\frac{1}{x}$ is not on the cut and therefore 
by continuity $\lim\limits_{\epsilon \to 0} \left\{\Li_{s}\left(\frac{1}{x+ i\epsilon}\right)-\Li_{s}\left(\frac{1}{x- i\epsilon}\right)\right\}=0$. Consequently,

\begin{align}
\lim\limits_{\epsilon \to 0} \left\{\Li_{s}\left(x+ i\epsilon\right)-\Li_{s}\left(x- i\epsilon\right)\right\}&=\frac{(2\pi i)^{s}}{\Gamma(s)}\left\{\zeta\left(1-s,\frac{\ln(x)}{2\pi i}\right)-\zeta\left(1-s,\frac{\ln(x)}{2\pi i}+1\right)\right\}\nonumber \\
&=\frac{2\pi i}{\Gamma(s)}\left(\ln(x)\right)^{s-1}.\label{sec3-eq18}
\end{align}

\end{remark}

\subsection{Parametrization of the Uniformly Absolutely-Convergent Series (\ref{sec2-eq1})}\label{subsec3_2}
 
\noindent The function $\zeta(s)$  which is the original defining series associated with  
the polylogarithm $\Li_{s}(x)$ is convergent only when $\re(s)>1$. In this 
section, we define an Abel type parametrized series associated with 
the uniformly absolutely-convergent series defined in section~\ref{sec2}, and which  is valid when $\re(s)>0$. 
For complex $x$ such that $|x|<1$, we define

\begin{align}
Z(s,x)&=\sum_{n=1}^{\infty}\frac{S_{n}(s)}{n+1}x^{n+1}\label{sec3-eq19}
\end{align}

be the Abel parametrization of the globally convergent series 
(\ref{sec2-eq1}). Note that the exponent of the variable $x$ is not $n$ but 
$n+1$ for reasons that will be apparent shortly. It can be easily seen from Lemma~\ref{sec2-lem1} that the radius of absolute uniform convergence of the series (\ref{sec3-eq19}) is 1.

Define  

\begin{align}
\phi(s,x)&=\sum_{n=1}^{\infty}S_{n}(s)x^{n}
.\label{sec3-eq20}
\end{align}

Similar to $Z(s,x)$, the radius of convergence of $\phi(s,x)$ is 1, and uniform
absolute convergence follows easily by comparison: the series
$\sum_{n=1}^{\infty}S_{n}(s)x^{n}$ clearly converges absolutely for
every point in $S\times \{x:\abs{x}<1\}$, where $S$ is any compact set of the $s$-plane.

Moreover,  within the circle of convergence  $Z(s,x)$ is  an antiderivative of 
$\phi(s,x)$, and we have

\begin{align}
Z(s,x)&=\int_{0}^{x}\sum_{n=1}^{\infty}S_{n}(s)t^{n}\,dt=\int_{0}^{x}\phi(s,t)\,dt.  \label{sec3-eq21}
\end{align}

Applying Abel's limit theorem, which is a consequence of uniform convergence, to the integrated series (\ref{sec3-eq21}), we can see that 

\begin{align}
\lim\limits_{x
\to 1}Z(s,x)=&\sum_{n=1}^{\infty}\frac{S_{n}(s)}{n+1}=(s-1)\zeta(s).\label{sec3-eq22}
\end{align}

Thus, the upper limit in the integral sign can be equal to 1 despite the fact that 
$\phi(s,x)=\sum_{n=1}^{\infty}S_{n}(s)x^{n}$ may not be convergent at $x=1$, a point on the boundary of the region of absolute and uniform convergence. Therefore, we can write

\begin{align}
(s-1)\zeta(s)&=\int_{0}^{1}\phi(s,t)\,dt.  \label{sec3-eq23}
\end{align}

It turns out that $\phi(s,x)$ can be conveniently expressed in terms of  the polylogarithm $\Li_{s}(x)$. Replacing $S_n(s)$ by its integral formula

\begin{align}
S_n(s)=\frac{1}{\Gamma(s)}
\int_{0}^{\infty}(1-e^{-t})^{n-1}e^{-t}t^{s-1}\,dt, \re(s)>0,\label{sec3-eq24}
\end{align}

and summing over $n$, we get

\begin{align}
\phi(s,x)&=\frac{x}{\Gamma(s)}
\int_{0}^{\infty}\frac{e^{-t}}{1-(1-e^{-t})x}t^{s-1}\,dt, \re(s)>0.\label{sec3-eq25}
\end{align}

A change of variable $u=e^{-t}$, a slight rearrangement, and  identification with the integral formula (\ref{sec3-eq4}) defining $\Li_{s}(x)$ leads to the expression 

\begin{align}
\phi(s,x)&=-\frac{1}{\Gamma(s)}
\int_{0}^{1}\frac{(-\log(u))^{s-1}}{\frac{x-1}{x}-u}\,du, \re(s)>0,\nonumber\\
&=-\Li_{s}\left(\frac{x}{x-1}\right).\label{sec3-eq26}
\end{align}

Finally, a combination of the last expression and equation  (\ref{sec3-eq21}) gives the following neat result

\begin{theorem}\label{sec3-thm2}
We have
\begin{align}\label{sec3-eq27}
(s-1)\zeta(s)&=-\int_{0}^{1}\Li_{s}\left(\frac{z}{z-1}\right)\,dz, \end{align}

and generally, within the unit circle
\begin{align}
Z(s,x)&=-\int_{0}^{x}\Li_{s}\left(\frac{z}{z-1}\right)\,dz,  \label{sec3-eq28}\end{align}
where the integrals are interpreted as line integrals.
\end{theorem}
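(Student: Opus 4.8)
The plan is to read off the theorem directly from two facts already established in this section: the term-by-term integration (\ref{sec3-eq21}), which exhibits $Z(s,x)$ as the antiderivative of $\phi(s,x)$ vanishing at the origin, and the closed form (\ref{sec3-eq26}), which identifies $\phi(s,x)$ with $-\Li_{s}\!\left(\frac{x}{x-1}\right)$. No genuinely new estimate is required; the two delicate points are the choice of branch of the polylogarithm along the contour and the passage to the boundary point $x=1$.

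First I would settle (\ref{sec3-eq28}). For $\abs{x}<1$, substituting the closed form (\ref{sec3-eq26}) into (\ref{sec3-eq21}) gives at once
\[
Z(s,x)=\int_{0}^{x}\phi(s,t)\,dt=-\int_{0}^{x}\Li_{s}\!\left(\frac{t}{t-1}\right)\,dt,\qquad \abs{x}<1.
\]
To make the line integral unambiguous I would record that the M\"obius map $z\mapsto \frac{z}{z-1}$ is an involution sending $0\mapsto 0$, $1\mapsto\infty$, and carrying the open unit disk onto the half-plane $\re(w)<\tfrac12$; indeed the unit circle $z=e^{i\theta}$ maps to $w=\tfrac12-\tfrac{i}{2}\cot(\theta/2)$, the vertical line $\re(w)=\tfrac12$, and $z=0\mapsto w=0$ selects the left side. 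This half-plane is disjoint from the cut $\intervallefo{1}{\infty}$, so the principal branch of $\Li_{s}$ composes with $\frac{z}{z-1}$ to a single-valued function holomorphic in $z$ on the whole disk. Consequently the integrand has no branch ambiguity, the line integral is path-independent, and (\ref{sec3-eq28}) holds for every $\abs{x}<1$.

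Next I would obtain (\ref{sec3-eq27}) by letting $x\to 1$. On the left, Abel's limit theorem has already supplied $\lim_{x\to 1}Z(s,x)=(s-1)\zeta(s)$ in (\ref{sec3-eq22}). For the right-hand side it remains to verify that the improper integral $\int_{0}^{1}\Li_{s}\!\left(\frac{z}{z-1}\right)\,dz$ converges and equals the limit of the integrals $\int_{0}^{x}$. Near $z=0$ the integrand is harmless, since $\frac{z}{z-1}\to 0$ and $\Li_{s}(0)=0$. Near $z=1$ we have $\frac{z}{z-1}\to-\infty$, and Corollary~\ref{sec3-cor1} shows the integrand behaves like a constant multiple of $\left(\ln\frac{z}{1-z}\right)^{s}\sim\left(\ln\frac{1}{1-z}\right)^{s}$; such logarithmic growth is absolutely integrable at $z=1$ whenever $\re(s)>0$ (after the substitution $u=1-z$ it reduces to a convergent $\Gamma$-integral). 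Hence the improper integral converges, $x\mapsto\int_{0}^{x}\Li_{s}\!\left(\frac{z}{z-1}\right)\,dz$ is continuous up to $x=1$, and letting $x\to 1$ in (\ref{sec3-eq28}) yields (\ref{sec3-eq27}).

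The hard part will be precisely this boundary passage: one must be sure that the Abel limit of the left-hand side matches the ordinary improper integral on the right, rather than some principal-value or regularized surrogate. The growth bound of Corollary~\ref{sec3-cor1} is exactly what guarantees absolute integrability near $z=1$ and thus licenses the exchange of limit and integral. Finally, to pass from $\re(s)>0$ to all $s\in\mathbb{C}$ I would invoke analytic continuation: for fixed $\abs{x}<1$ the series defining $Z(s,x)$ is analytic in $s$ on all of $\mathbb{C}$ (as noted after (\ref{sec3-eq20})), while for such $x$ the image of the segment under $z\mapsto\frac{z}{z-1}$ is a compact set bounded away from $\intervallefo{1}{\infty}$, so the right-hand side of (\ref{sec3-eq28}) is likewise entire in $s$; agreement on the half-plane $\re(s)>0$ then forces agreement throughout, and the identity (\ref{sec3-eq27}) persists wherever the limiting integral converges.
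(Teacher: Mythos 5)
Your proposal is correct and follows essentially the same route as the paper: you combine the term-by-term integration (\ref{sec3-eq21}) with the closed form (\ref{sec3-eq26}) and then pass to the boundary point $x=1$ via Abel's limit theorem, exactly as the paper does in (\ref{sec3-eq22})--(\ref{sec3-eq23}). Your additional care about single-valuedness (the M\"obius image of the disk avoiding the cut $\intervallefo{1}{\infty}$), the absolute convergence of the improper integral near $z=1$, and the analytic continuation in $s$ beyond $\re(s)>0$ are refinements of details the paper leaves implicit, not a different method.
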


\fancyfoot{}
 \fancyfoot[C]{\leavevmode
 \put(0,0){\color{lightaqua}\circle*{34}}
 \put(0,0){\color{myaqua}\circle{34}}
 \put(-5,-3){\color{myaqua}\thepage}}

Actually, the variable $x$ in the last theorem need not be restricted to the unit circle, and the integrals can be interpreted as curvilinear integrals instead of line integrals as long as we carefully restrict the integrand to a portion of the plane where it is single-valued. For this purpose, let's further analyze the function $\phi(s,x)$.

The integral in (\ref{sec3-eq26}) provides the analytic continuation of $\phi(s,x)$ to the whole $x$-plane save values of $x$ for which the integral is not defined, i.e. $\frac{x-1}{x}\notin \intervalle{0}{1}$; that is,  when $x\notin \intervallefo{1}{+\infty}$. The integral expression  also shows that $\Li_{s}\left(\frac{x}{x-1}\right)$ is a multi-valued function with the points 0, 1 and $\infty$ as the only branch point singularities. The points $1$ and $\infty$ are the only branch points of the principal branch which coincides with the series $\phi(s,x)$. The principal branch is single valued if the plane is cut along $\intervallefo{1}{+\infty}$. The point $x=0$ is however a singular point for all the other branches of $\Li_{s}\left(\frac{x}{x-1}\right)$, and to make the function single valued, the plane should be cut along $\intervalleof{-\infty}{0}\cup \intervallefo{1}{\infty}$. For ease of notation, and from now on, we refer to  the cut plane set  
$\mathbb{C}\setminus
\left\{\intervalleof{-\infty}{0}\cup \intervallefo{1}{\infty}\right\}$ as $\mathbb{C}^{*}$, and we refer to $\phi(s,x)$ by its analytic continuation $\Li_{s}\left(\frac{x}{x-1}\right)$.

The next theorem expresses $Z(s,x)$ in a form which differs from (\ref{sec3-eq28}). The expression of $Z(s,x)$ still uses the polylogarithm but it is rewritten as a sum of a constant term and a variable error term: 

\begin{theorem}\label{sec3-thm3}
For $x\in \mathbb{C}\setminus \{\intervalleoo{-\infty}{0}\cup \intervalleoo{1}{\infty}\}$, we have
\begin{align}\label{sec3-eq29}
Z(s,x)&=(s-1)\zeta(s)+\int_{x}^{1}\Li_{s}\left( \frac{z}{z-1}\right)\,dz,
\end{align}
the integral being taken along a finite continuous path in the $x$-cut plane $\mathbb{C}^{*} \triangleq \mathbb{C}\setminus \{\intervalleof{-\infty}{0}\cup \intervallefo{1}{\infty}\}$, and where the error term satisfies

\begin{align}\label{sec3-eq30}
\int_{x}^{1}\Li_{s}\left( \frac{z}{z-1}\right)\,dz& \sim \frac{-1}{\Gamma(1+s)}(1-x)\left[- \log\left(\frac{1}{x}-1\right)\right]^{s},
\end{align}

as $x\to 1$, $x\in\mathbb{C}\setminus \intervalleoo{1}{\infty}$.
\end{theorem}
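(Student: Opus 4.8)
The first claim of the theorem is essentially a restatement of Theorem~\ref{sec3-thm2}. Starting from the curvilinear representation \eqref{sec3-eq28}, namely $Z(s,x) = -\int_0^x \Li_s(z/(z-1))\,dz$, the plan is to split the path from $0$ to $x$ into a path from $0$ to $1$ followed by a path from $1$ to $x$, both kept inside $\mathbb{C}^*$ so that the integrand stays single-valued and holomorphic. Additivity of the line integral together with a reversal of orientation gives $-\int_0^x = -\int_0^1 + \int_x^1$, and invoking \eqref{sec3-eq27} to identify $-\int_0^1 \Li_s(z/(z-1))\,dz$ with $(s-1)\zeta(s)$ yields \eqref{sec3-eq29} immediately. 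The only thing to verify is path-independence of the decomposition, which holds because $\Li_s(z/(z-1))$ is holomorphic on the simply connected region $\mathbb{C}^*$.

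For the asymptotic \eqref{sec3-eq30} I would first substitute $u = 1-z$, writing $\delta = 1-x$, so that
\[
\int_x^1 \Li_s\!\left(\frac{z}{z-1}\right)dz = \int_0^{\delta} \Li_s\!\left(1 - \frac1u\right)du .
\]
As $u \to 0$ the argument $w = 1 - 1/u$ tends to infinity off the ray $\intervallefo{0}{\infty}$, so Corollary~\ref{sec3-cor1} applies and gives $\Li_s(1-1/u) = -(\ln(-w))^s/\Gamma(s+1) + o(1)$ with $\ln(-w) = \ln\!\big((1-u)/u\big)$. This reduces the task to establishing that the leading integral satisfies
\[
\int_0^{\delta} \Big[\ln\!\Big(\tfrac{1-u}{u}\Big)\Big]^s du \;\sim\; \delta\,\Big[\ln\!\Big(\tfrac{1-\delta}{\delta}\Big)\Big]^s, \qquad \delta \to 0 ,
\]
after which I would rewrite $\ln((1-\delta)/\delta) = \log(x/(1-x)) = -\log(1/x-1)$ and $\delta = 1-x$, together with the prefactor $-1/\Gamma(1+s)$, to recover the stated form.

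The core of the argument is this last $\sim$. Setting $L(u) = \ln((1-u)/u)$, a single integration by parts gives
\[
\int_0^{\delta} L(u)^s\,du = \delta\,L(\delta)^s + s\int_0^{\delta} \frac{L(u)^{s-1}}{1-u}\,du ,
\]
the boundary term at $u=0$ vanishing because $u\,L(u)^s \sim u(-\ln u)^s \to 0$. Since $L(u)\to\infty$ as $u\to 0$, the remaining integral is of the same type with the exponent lowered by one, hence $O(\delta\,L(\delta)^{s-1}) = o(\delta\,L(\delta)^s)$, which proves the claim.

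The hard part will be justifying that the $o(1)$ remainder from Corollary~\ref{sec3-cor1} can be integrated without disturbing the leading order. The delicate point is uniformity: on the shrinking interval $(0,\delta]$ one has $|w| = |1-1/u| \ge 1/|\delta| - 1 \to \infty$, so the remainder is \emph{uniformly} $o(1)$ there and $\int_0^{\delta} o(1)\,du = o(\delta)$. Because $\re(s)>0$ forces $\abs{L(\delta)^s}\to\infty$, the contribution $o(\delta)$ is negligible against the main term $\delta\,L(\delta)^s$; the correction $\ln(1-u)$ hidden in $L$ is likewise subdominant. Finally, for complex $x$ approaching $1$ off the ray $\intervalleoo{1}{\infty}$, care must be taken to keep the path of $u$, equivalently of $w$, away from $\intervallefo{0}{\infty}$ so that Corollary~\ref{sec3-cor1} remains valid along it; this is ensured by the Möbius map $z\mapsto z/(z-1)$ carrying $\mathbb{C}^*$ off $\intervallefo{0}{\infty}$.
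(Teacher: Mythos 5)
Your proposal is correct, and its second half takes a genuinely different route from the paper. For the identity \eqref{sec3-eq29} the two arguments are essentially equivalent: the paper re-runs the term-by-term integration of the series for $\phi(s,x)$ to obtain $\int_{x}^{1}\Li_{s}\left(\frac{z}{z-1}\right)dz=-(s-1)\zeta(s)+Z(s,x)$ inside the unit disk, then extends to $\mathbb{C}^{*}$ by appealing to a Cauchy--Goursat theorem for improper integrals with a singular endpoint (the citations of Jordan and Lu), whereas you split the path, $\int_{0}^{1}=\int_{0}^{x}+\int_{x}^{1}$, and quote Theorem~\ref{sec3-thm2}; in your version the convergence of the improper integral at the singular endpoint $z=1$ is inherited from \eqref{sec3-eq27}, so the same ingredients appear in a different order. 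The substantive difference is the proof of \eqref{sec3-eq30}. The paper sets $f(x)=\int_{x}^{1}\Li_{s}\left(\frac{z}{z-1}\right)dz$ and $g(x)=(1-x)\left[-\log\left(\frac{1}{x}-1\right)\right]^{s}$ and applies Carter's L'H\^{o}pital rule for complex-valued functions (Theorem~\ref{sec3-thm4}), the work being the verification of condition (ii) on $\abs{g(x)}^{\prime}$ and the boundedness of $\abs{g^{\prime}(x)}/\abs{g(x)}^{\prime}$; you instead substitute $u=1-z$, apply Corollary~\ref{sec3-cor1} with uniform control of the $o(1)$ remainder, and prove $\int_{0}^{\delta}L(u)^{s}\,du\sim\delta L(\delta)^{s}$, where $L(u)=\ln\left(\frac{1-u}{u}\right)$, by a single integration by parts. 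Your route is more elementary (no appeal to Carter's theorem), it makes explicit the uniformity of the remainder from Corollary~\ref{sec3-cor1} --- a point the paper passes over silently --- and iterating the integration by parts would even yield a full asymptotic expansion rather than just the leading term. Two points deserve tightening, both at the level of rigor at which the paper itself operates: your claim that $s\int_{0}^{\delta}\frac{L(u)^{s-1}}{1-u}\,du=O\left(\delta L(\delta)^{s-1}\right)$ is immediate only when $\sigma=\re(s)\le 1$, since then $\abs{L(u)^{s-1}}=L(u)^{\sigma-1}\le L(\delta)^{\sigma-1}$ on $(0,\delta)$; for $\sigma>1$ it requires the finite iteration you only allude to. And your argument, like the paper's (Carter's theorem is stated for functions on a real interval), is genuinely carried out for real $x\to 1^{-}$, the complex approach within $\mathbb{C}\setminus\intervalleoo{1}{\infty}$ being dispatched by a path-deformation remark of the same brevity as the paper's ``without loss of generality''.
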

\begin{proof}

Again using the series expression $\phi(s,x)$ and staying within the circle of convergence, $\Li_{s}\left(\frac{x}{x-1}\right)$ can be integrated term-by-term

\begin{align}
\int_{x}^{1}\Li_{s}\left(\frac{z}{z-1}\right)\,dz&=-(s-1)\zeta(s)+\sum_{n=1}^{\infty}\frac{S_{n}(s)}{n+1}x^{n+1}\nonumber\\
&=-(s-1)\zeta(s)+Z(s,x),\label{sec3-eq31}
\end{align}

where $x$ is now a variable and the upper limit of integration is a not of 
point of absolute convergence of the series defining 
$\Li_{s}\left(\frac{x}{x-1}\right)$. But since the integrated series converges 
to 0 when $x=1$, Abel's limit theorem on the continuity of power series 
shows that the integral in (\ref{sec3-eq31}) is well-defined. We next show that 
the left hand side of (\ref{sec3-eq31}) defines a global primitive in 
$\mathbb{C}^{*}$.  

We know that a global primitive exists as long as the integral depends only
 on the ordinary endpoints of an admissible path. By an admissible path we 
mean a continuous path  of finite length with ordinary endpoints and such 
that it does not cross the cut $\intervalleof{-\infty}{0}\cup \intervallefo{1}{\infty}$. This is possible since $\mathbb{C}^{*}$ is a star-shaped domain, 
hence it is arc-wise connected; that is, it is always possible that the 
initial and final points of integration remain on the same branch of 
$\Li_{s}\left(\frac{x}{x-1}\right)$. 

But in (\ref{sec3-eq31}), one of the endpoints is singular and so the 
classical theorem on the existence of global primitive needs to be extended 
to integrals with singular endpoints.  This is possible since Cauchy-Goursat theorem and primitive functions can be generalized to improper complex 
integrals provided that the involved integrals are convergent 
\cite[p. 261]{jordan} and \cite[Theorem~$3.3^{\prime}$, p. 89]{lu}. 
Therefore, the integral is well-defined for any $x\in \mathbb{C}^{*}$. It is 
also obviously well-defined for $x=0$ and $x=1$.

To prove the approximation of the error term, we first note that when $x\in \mathbb{C}^{*}$, 
$\log\left(\frac{z}{1-z}\right)=-\log\left(\frac{1-z}{z}\right)$. We then use the estimate of Corollary~\ref{sec3-cor1} which provides 

\begin{align}\label{sec3-eq32}
\Li_{s}\left( \frac{z}{z-1}\right)& \sim \frac{-1}{\Gamma(1+s)}\left[- \log\left(\frac{1}{z}-1\right)\right]^{s},
\end{align}

as $z\to 1$, $z\in \mathbb{C}^{*}$. Finally, since $z$ is in a neighborhood of $1$, we can lift the restriction on $z$ near the point $0$, and the estimate is valid when $z\in\mathbb{C}\setminus \intervalleoo{1}{\infty}$.

To complete the proof, we suppose, without loss of generality, that the integral is taken along a real path, i.e. $x\in \intervalleoo{0}{1}$ since 
$\Li_{s}\left(\frac{z}{z-1}\right)$ being analytic and single-valued in  
$\mathbb{C}\setminus \{\intervalleof{-\infty}{0}\cup \intervallefo{1}{\infty}\}$
Cauchy's theorem permits the deformation of the path to a new real path as 
long as  $x\to 1$. We will show that

\begin{align}\label{sec3-eq33}
&\lim\limits_{x \to 1}\frac{\int_{x}^{1}\Li_{s}\left( \frac{z}{z-1}\right)\,dz}{(1-x)\left[- \log\left(\frac{1}{z}-1\right)\right]^{s}}=
\frac{-1}{\Gamma(s+1)}.
\end{align}

Indeed, let $f(x)=\int_{x}^{1}\Li_{s}\left( \frac{z}{z-1}\right)\,dz$ and let $g(x)=\left(1-x\right)\left[- \log\left(\frac{1}{x}-1\right)\right]^{s}$, where $s=\sigma+i\omega$ is a complex number. It can be easily seen that $\lim\limits_{x\to 1}f(x)=\lim\limits_{x\to 1}g(x)=0$.
In addition, from (\ref{sec3-eq32}), $f^{\prime}(x)$ satisfies the estimate

\begin{align}\label{sec3-eq34}
&f^{\prime}(x)=-\Li_{s}\left( \frac{x}{x-1}\right) \sim \frac{1}{\Gamma(1+s)}\left[- \log\left(\frac{1}{x}-1\right)\right]^{s}, x\to 1.
\end{align}

All we need now is to invoke the following theorem which provides conditions for the applications of  L'H\^{o}pital's rule for complex-valued functions:

\begin{theorem}[L'H\^{o}pital's rule for complex-valued functions \cite{carter}]\label{sec3-thm4}
Let  $f,g : \intervalleoo{c}{1} \longrightarrow \mathbb{C}$ be differentiable on 
the open interval $\intervalleoo{c}{1}$ and let
\[\lim\limits_{x\to 1}f(x)=\lim\limits_{x\to 1}g(x)=0.\]

If the two conditions

\begin{description}
\item[\rm (i)] $\displaystyle{\lim_{x\to 1}\frac{f^{\prime}(x)}{g^{\prime}(x)}=L}$,
\item[\rm (ii)] the derivative $\abs{g(x)}^{\prime}$ of $\abs{g(x)}$ exists and does not vanish in 
$\intervalleoo{c}{1}$, and  the ratio $\frac{\abs{g^{\prime}(x)}}{\abs{g(x)}^{\prime}}$ is bounded on
$\intervalleoo{c}{1}$,
\end{description}
then

\[\lim_{x\to 1}\frac{f(x)}{g(x)}=L.\]

\end{theorem}

It remains to verify the conditions of  Theorem~\ref{sec3-thm4}. Clearly, with $L=\frac{-1}{\Gamma(s+1)}$ and  (\ref{sec3-eq34}), condition {\rm (i)} is verified. And, it is easy to verify that the function $g(x)$ verifies condition {\rm (ii)}. Indeed, when $x\in \intervalleoo{c}{1}$ for some $c$ close to 1, we have

\begin{align}\label{sec3-eq35}
\abs{g(x)}&=\left(1-x\right)\left[- \log\left(\frac{1}{x}-1\right)\right]^{\sigma},\\
g^{\prime}(x)&=\left[- \log\left(\frac{1}{x}-1\right)\right]^{s}\left[-1- \frac{s}{x\log\left(\frac{1}{x}-1\right)}  \right],\label{sec3-eq36}\\
\abs{g(x)}^{\prime}&=\left[- \log\left(\frac{1}{x}-1\right)\right]^{\sigma}\left[-1- \frac{\sigma}{x\log\left(\frac{1}{x}-1\right)}   \right]\label{sec3-eq37}
\end{align}

By choosing $c$ very close to 1, the terms $\frac{s}{x^2\log\left(\frac{1}{x}-1\right)}$ and $\frac{\sigma}{x^2\log\left(\frac{1}{x}-1\right)}$ in (\ref{sec3-eq36})-(\ref{sec3-eq37}) can be made arbitrarily close to 0, and therefore, the ratio $\frac{\abs{g^{\prime}(x)}}{\abs{g(x)}^{\prime}}$ can be made arbitrarily close to 1. The proof of the theorem follows.

\end{proof}

For a trivial or nontrivial zero $s$, two  direct consequences of 
Theorem~\ref{sec3-thm3} are:  (i) $\displaystyle{\int_{0}^{1}\Li_{s}\left(\frac{z}{z-1}\right)\,dz=0}$ and \newline  
(ii) $\displaystyle{\int_{0}^{\frac{1}{2}}\Li_{s}\left(\frac{z}{z-1}\right)\,dz= 
-\int_{\frac{1}{2}}^{1}\Li_{s}\left(\frac{z}{z-1}\right)\,dz}$, where  the
 special point $z=\frac{1}{2}$ splits the integral into two equal parts 
in absolute value and annuls the integrand since 
$\displaystyle{\Li_{s}\left(\frac{\frac{1}{2}}{\frac{1}{2}-1}\right)=\Li_{s}(-1)=0}$. For negative integer values of $s$, $\Li_{s}\left(\frac{z}{z-1}\right)$ 
are polynomials as we will see in the next section, and the geometry of 
their zeros and identities (i) and (ii) can be easily analyzed. When $s$ is not a negative integer, the zeros of $\Li_{s}\left(\frac{z}{z-1}\right)$ and its integral  should be finite in number according to \cite{gawronski} since they do not accumulate at the critical points $z=1$ or $z=\infty$.

\subsection{The Parametrized Series at Integer Values of \texorpdfstring{$s$}{s}}\label{subsec3_3}

\noindent When $s=-k\in \mathbb{Z}^{-}$, $S(n,-k)$ are zero for $n> k+1$. The series 
$\phi(s,x)$ in (\ref{sec3-eq20})
and the globally convergent series $Z(s,x)$ in (\ref{sec3-eq19}) become polynomials of degree 
$k+1$ and $k+2$ respectively: 

\begin{align}
-\phi(-k,x)&=\Li_{-k}\left(\frac{z}{z-1}\right)\nonumber\\
&= (-1)^{k+1}z(z-1)^{k}A_{k}\left(\frac{z}{z-1}\right)\nonumber\\
&=(-1)^{k+1}\sum_{j=0}^{k-1}\eulerian{k}{j}z^{j+1}(z-1)^{k-j+1},\label{sec3-eq38}\\
Z(-k,x)&=(-1)^{k}\int_{0}^{x}\sum_{j=0}^{k-1}\eulerian{k}{j}z^{j+1}(z-1)^{k-j+1}\,dz,\label{sec3-eq39}
\end{align}

where $A_k(z)=\sum_{j=0}^{k-1}\eulerian{k}{j}z^{j}$ are the Eulerian polynomials and ${\eulerian{k}{j}}$ are the Eulerian numbers. When $s=k\in \mathbb{Z}^{+}$, the Lerch-Jonqui{\`e}re formulas in Theorem~\ref{sec3-thm1} are reduced to finite terms:

\begin{enumerate}
\item  For complex $z\notin \intervalleof{-\infty}{0}$:

\begin{align}
\Li_{k}\left(\frac{z}{z-1}\right)+(-1)^{k}\Li_{k}\left(\frac{z-1}{z}\right)&=-\frac{(2\pi i)^{k}}{k!}B_{k}\left(\frac{1}{2}+\frac{\ln\left(\frac{z}{1-z}\right)}{2\pi i}\right),\label{sec3-eq40}
\end{align}

\item  For complex $z\notin \intervallefo{1}{\infty}$:
\begin{align}
\Li_{k}\left(\frac{z}{z-1}\right)+(-1)^{k}\Li_{k}\left(\frac{z-1}{z}\right)&=-\frac{(2\pi i)^{k}}{k!}B_{k}\left(\frac{1}{2}-\frac{\ln\left(\frac{1-z}{z}\right)}{2\pi i}\right),\label{sec3-eq41}
\end{align}

\item For $z\notin
\left\{\intervalleof{-\infty}{0}\cup \intervallefo{1}{\infty}\right\}$:
\begin{align}
\Li_{k}\left(\frac{z}{z-1}\right)+(-1)^{k}\Li_{k}\left(\frac{z-1}{z}\right)&=-\frac{(2\pi i)^{k}}{k!}B_{k}\left(\frac{\ln\left(\frac{z}{z-1}\right)}{2\pi i}\right),\label{sec3-eq42}
\end{align}
\end{enumerate}

where $B_{k}(x)$ are the Bernoulli polynomials of order $k$. Thus, for example around $z=1$, we can use (\ref{sec3-eq41}) to find a local analytic expression for 
$Z(k,x)$. We find

\begin{align}
\Li_{k}\left(\frac{z}{z-1}\right)&=-\frac{(2\pi i)^{k}}{k!}B_{k}\left(\frac{1}{2}-\frac{\ln\left(\frac{1-z}{z}\right)}{2\pi i}\right)+\Omega(z)\nonumber\\
&=\text{albegraic polynomial in~} \ln(1-z), \ln(z) +\Omega(z),\label{sec3-eq43}
\end{align}

where $\Omega(z)=(-1)^{k}\Li_{k}\left(\frac{z-1}{z}\right)$ is regular around 
$z=1$. And if we use the explicit well-known expression 
$B_k(x)=\sum_{j=0}^{k}\binom{k}{j}
 B_{k-j}x^{j}$, where $B_{k-j}$ are the Bernoulli numbers, we get

\begin{align}
Z(k,x)&=\frac{(2\pi i)^{k}}{k!}\sum_{j=0}^{k}\binom{k}{j} B_{k-j}
\int_{0}^{x} \left(\frac{1}{2}-\frac{\ln\left(\frac{1-z}{z}\right)}{2\pi i}\right)^{j} \,dz+\int_{0}^{x} \Omega(z) \,dz\nonumber\\
&=\text{albegraic polynomial in~} (1-x), \ln(1-x)+\Omega_1(x),\label{sec3-eq44}
\end{align}

where $\Omega_1(x)$ is analytic at $x=1$. In the following example, we illustrate use of this formula by expressing the dilogarithm  function $\Li_{2}(x)$.
\begin{example}\label{sec4-example1} For $k=2$, $B_2(x)=x^2-x+\frac{1}{6}$. We obtain the formulas

\begin{align}
\Li_{2}\left(\frac{z}{z-1}\right)+\Li_{2}\left(\frac{z-1}{z}\right)&=2\pi^{2}B_{2}\left(\frac{1}{2}-\frac{\ln\left(\frac{1-z}{z}\right)}{2\pi i}\right),
\label{sec3-eq45}\\
B_2\left(\frac{1}{2}-\frac{\ln\left(\frac{1-z}{z}\right)}{2\pi i}\right)&=
-\frac{1}{12}-\frac{1}{4\pi^2}\ln\left(\frac{1-z}{z}\right)^2\nonumber\\
&=-\frac{\ln(1-z)^2}{2}+\ln(1-z)\ln(z)-\frac{\ln(z)^2}{2}-\frac{\pi^2}{6}.\label{sec3-eq46}
\end{align}
Accordingly,
\begin{align}
\Li_{2}\left(\frac{z}{z-1}\right)&=-\frac{\ln(1-z)^2}{2}+\Li_2(x),\label{sec3-eq47}\\
Z(2,x)&=\zeta(2)+(1-x)\bigg[\frac{1}{2}\ln^{2}(1-x)+\ln(1-x)-1\bigg]-\int_{x}^{1}\Li_{2}(x)\,dx.\label{sec3-eq48}
\end{align}
\end{example}

A summary of the formulas for $Z(s,x)$ for different values of $s$ is illustrated in Table~\ref{sec3-table1}.

\vspace{1mm}

\begin{table}[!ht]
\small
\centering
\begin{tabular}{BB} 
\toprule
\multicolumn{4}{l}{\bf Negative integer values of $s$} \\
\midrule
\Li_{-k}\left(\frac{z}{z-1}\right) &=(-1)^{k+1}\sum_{j=0}^{k-1}\eulerian{k}{j}z^{j+1}(z-1)^{k-j+1} & Z(-k,x) &= (-1)^{k}\int_{0}^{x}\sum_{j=0}^{k-1}\eulerian{k}{j}z^{j+1}(z-1)^{k-j+1}\,dz \\
 \vdots~~~~ & ~~~~~~~\vdots& \vdots~~~~ &=~~~~ \vdots \\
\Li_{-4}\left(\frac{z}{z-1}\right) &=z(z-1)(2z-1)(12z^2-12z+1) & Z(-4,x) &= \frac{x^2}{2}(x-1)^2(8x^2-8x+1) \\
\Li_{-3}\left(\frac{z}{z-1}\right) &=-z(z-1)(6z^2-6z+1) & Z(-3,x) &= \frac{x^2}{2}\left(\frac{12}{5}x^3-6x^2+\frac{14}{3}x-1\right) \\
\Li_{-2}\left(\frac{z}{z-1}\right)    &=-z(z-1)(2z-1) & Z(-2,x)  &=\frac{x^2}{2}(x-1)^2  \\
\Li_{-1}\left(\frac{z}{z-1}\right)    &=z(z-1) & Z(-1,x)  &= \frac{x^2}{2}\left(-\frac{2}{3}x+1\right) \\
\Li_{0}\left(\frac{z}{z-1}\right)    &= -z & Z(0,x)    &= \frac{x^2}{2} \\
\midrule
\multicolumn{4}{l}{\bf Critical Strip $0<\re(s)<1$} \\
\midrule
\Li_{s}\left(\frac{z}{z-1}\right)   &\sim -\frac{(-\ln(\frac{1}{z}-1))^{s}}{\Gamma(s+1)},z\to 1, z\in \mathbb{C}^{*}& Z(s,x)      &= (s-1)\zeta(s)+(x-1)\frac{(-\ln(\frac{1}{x}-1))^{s}}{\Gamma(s+1)}[1+\cdots] \\
\midrule
\multicolumn{4}{l}{\bf Positive integer values of $s$} \\
\midrule
\Li_{1}\left(\frac{z}{z-1}\right)    &= \ln(\frac{1}{z}-1)+\ln(z)\sim \ln(1-z), z\to 1, z\in \mathbb{C}^{*}   & Z(1,x)   &= 1+(x-1)\left[1-\ln(1-x)\right] \\
\Li_{2}\left(\frac{z}{z-1}\right)    &= -\frac{1}{2}\ln^{2}(\frac{1}{z}-1) -\frac{\pi^2}{6}-\Li_{2}(1-\frac{1}{z}) & Z(2,x)     &= \zeta(2)+(x-1)\bigg[\frac{1}{2}\ln^{2}(1-x)\\
&\sim -\frac{1}{2}\ln^{2}(1-z), z\to 1, z\in \mathbb{C}^{*} &  &\qquad +\ln(1-x)-1\bigg]-\int_{x}^{1}\Li_{2}(x)\,dx  \\
\vdots~~~~ & ~~~~~~~~\vdots& \vdots~~~~ &=~~~~ \vdots \\
\Li_{n}\left(\frac{z}{z-1}\right)    &= -\frac{(2\pi i)^n}{n!}B_n\left(\frac{1}{2}-\frac{\ln(\frac{1}{z}-1)}{2\pi i}\right)-\Li_{n}\left(1-\frac{1}{z}\right)  & Z(n,x)     &= (n-1)\zeta(n)+(x-1)\frac{\ln^{n}(1-x)}{n!}[1+\cdots] \\
&\sim (-1)^{n+1}\frac{1}{n!}\ln^{n}(1-z), z\to 1, z\in \mathbb{C}^{*} &  &  \\
\bottomrule
\end{tabular}
\caption{Table of the parametrized zeta function $Z(s,x)$ for different values of $s$; $A_n$ are Eulerian polynomials, ${\eulerian{k}{j}}$ the Eulerian numbers, 
 and $\mathbb{C}^{*}=\mathbb{C}\setminus
\left\{\intervalleof{-\infty}{0}\cup \intervallefo{1}{\infty}\right\}$.
}\label{sec3-table1}
\end{table}

\section{Qualitative Characterization of the Nontrivial Zeros of \texorpdfstring{$\zeta(s)$}{zeta(s)}}\label{sec4}
 
\noindent
 The  zeros of $\zeta(s)$ come into two types. The trivial zeros which occur at all negative even integers
$s=-2,-4,\cdots$, and the nontrivial zeros which occur at certain
values of $s\in \mathbb{C}$ in the critical strip $0<\re(s)<1$ and whose existence in infinite number has been know for a long time. Since 
$\zeta(s)$ is an analytic function, the nontrivial
zeros must be isolated and each must have a finite multiplicity $m$. The multiplicity $m$ of
the non-trivial zero is believed by many experts to be 1 but
remains to this date unproved.
Thus, if $s_0$ is a non-trivial zero, there exists a neighborhood
$B_{\delta}(s_0)$, $\delta>0$ sufficiently small, such that $s_0$
is the unique zero in $B_{\delta}(s_0)$. 

Let $s_0$ be one of the nontrivial zeros. Let the function
$\mathscr{R}(s)$ be defined for all $s\in B_{\delta}(s_0)$,
$s\ne s_0$ by:

\begin{equation}\label{sec4-eq1}
\mathscr{R}(s)\triangleq\frac{\zeta(s)}{\zeta(1-s)}, s\ne s_0.
\end{equation}

The functional equation of the Riemann zeta
function implies the existence of the following limit:

\begin{equation}\label{sec4-eq2}
\lim\limits_{s \to s_0}\frac{\zeta(s)}{\zeta(1-s)}=
\frac{\pi^{-\frac{1-s_0}{2}}\Gamma(\frac{1-s_0}{2})}{\pi^{-\frac{s_0}{2}}
\Gamma(\frac{s_0}{2})}.
\end{equation}

Consequently, we can extend by continuity the definition of $\mathscr{R}(s)$ to
the whole set $B_{\delta}(s_0)$. The new extension by continuity,
also denoted by $\mathscr{R}(s)$, is given by:

\begin{equation}\label{sec4-eq3}
    \mathscr{R}(s)\triangleq
\begin{cases} \frac{\zeta(s)}{\zeta(1-s)} &\mbox{if } s \ne s_0 \\
\frac{\pi^{-\frac{1-s_0}{2}}\Gamma(\frac{1-s_0}{2})}{\pi^{-\frac{s_0}{2}}
\Gamma(\frac{s_0}{2})}& \mbox{if } s=s_0.
\end{cases}
\end{equation}

This implies that for a non-trivial zero $s_0$, the ratio

\begin{equation}\label{sec4-eq4}
\frac{\zeta(s_0)}{\zeta(1-s_0)} =\mathscr{R}(s_0)
\end{equation}

is well-defined and is a finite non-zero number. In a similar fashion, we can also define a ratio of parametrized zeta functions using the parametrized zeta function $Z(s,x)$ defined by (\ref{sec3-eq19}) in section~\ref{sec3}. This is described next.

\subsection{Ratio of Parametrized Zeta Series}\label{subsec4_1}

 \noindent

In this section, we will purposely dwell on the details because interchanging limits of functions of two variables can be very delicate. Recall the Abel parametrization $Z : B_{\delta}(s_0)\times (0,1) \longrightarrow
\mathbb{C}$ of the uniformly absolutely-convergent
series for $\zeta(s)$ defined by

\begin{align}
Z(s,x)&=\sum_{n=1}^{\infty}\frac{S_{n}(s)}{n+1}x^{n+1},\label{sec4-eq5}
\end{align}

where  $B_{\delta}(s_0)$ is an open ball containing the only nontrivial zero $s_0$. For fixed $s$, uniform-absolute convergence with respect to $x$ implies continuity at
the boundary point $x=1$:

\begin{eqnarray}  \label{sec4-eq6}
\lim\limits_{x \to 1}Z(s, x)&=&Z(s, 1)=\sum_{n=1}^{\infty}\frac{S_{n}(s)}{n+1}=(s-1)\zeta(s).
\end{eqnarray}

Moreover, by uniform-absolute convergence with respect to $s$ the
existence of the following repeated limit is immediate:

\begin{eqnarray}  \label{sec4-eq7}
\lim\limits_{s \to s_0}\lim\limits_{x \to 1}Z(s,
x)&=&\lim\limits_{s \to s_0}\sum_{n=1}^{\infty}\frac{S_{n}(s)}{n+1}= 
(s_0-1)\zeta(s_0).
\end{eqnarray}

Similarly,

\begin{eqnarray}  \label{sec4-eq8}
\lim\limits_{x \to 1}\lim\limits_{s \to s_0}Z(s,
x)&=&\lim\limits_{x \to 1}\sum_{n=1}^{\infty}\frac{S_{n}(s_0)}{n+1}x^{n+1}=\sum_{n=1}^{\infty}\frac{S_{n}(s_0)}{n+1}=(s_0-1)\zeta(s_0).
\end{eqnarray}

Therefore, from the parametrized series definition and for a
non-trivial zero $s_0$, we always have

\begin{align}  \label{sec4-eq9}
\lim\limits_{s \to s_0}\lim\limits_{x \to 1}Z(s,
x)&=\lim\limits_{x \to 1}\lim\limits_{s \to s_0}Z(s,
x)=(s_0-1)\zeta(s_0)=0.
\end{align}

The same identity  holds for $1-s_0$ which is also a zero of
$(s-1)\zeta(s)$. From these found identities, we now deduce a
similar identity for the ratio of zeta functions. Indeed, by the
definition of limit of the ratio of two functions, we know that

\begin{eqnarray}
  \lim\limits_{x \to 1}\frac{Z(s, x)}{Z(1-s, x)}&=& \frac{\lim\limits_{x \to 1}Z(s, x)}
  {\lim\limits_{x \to 1}Z(1-s, x)}=\frac{(s-1)\zeta(s)}{-s\zeta(1-s)}
  \label{sec4-eq10}
\end{eqnarray}

\emph{provided the right hand side of (\ref{sec4-eq10}) exists}.
For $s\ne s_0$, the right hand side obviously exists and is a
finite nonzero number. Moreover, from the discussion in the beginning of this section, the right hand side exists of (\ref{sec4-eq10})
even for $s=s_0$ which is originally an indeterminate form and
made determined using the functional equation. Therefore, with the
absolute value being continuous, we get

\begin{eqnarray}
  \lim\limits_{s \to s_0}\lim\limits_{x \to 1}\left|\frac{Z(s, x)}{Z(1-s, x)}\right|&=&
  \lim\limits_{s \to
  s_0}\left|\frac{(s-1)\zeta(s)}{-s\zeta(1-s)}\right|=\left|\frac{s_0-1}{-s_0}\mathscr{R}(s_0)\right|.
  \label{sec4-eq11}
\end{eqnarray}

Similarly, continuity of $Z(s, x)$ with respect to $s$ gives

\begin{eqnarray}
 \lim\limits_{s \to s_0}\frac{Z(s, x)}{Z(1-s, x)}&=&
  \frac{Z(s_0, x)}{Z(1-s_0, x)}.
  \label{sec4-eq12}
\end{eqnarray}

Hence,

\begin{eqnarray}
  \lim\limits_{x \to 1}\lim\limits_{s \to s_0}\left|\frac{Z(s, x)}{Z(1-s, x)}\right|&=&
  \frac{\lim\limits_{x \to
  1}\left|Z(s_0, x)\right|}{\lim\limits_{x \to
  1}\left|Z(1-s_0, x)\right|}.
  \label{sec4-eq13}
\end{eqnarray}

\emph{provided the right hand side exists}. Indeed, both the
numerator and the denominator exist because of (\ref{sec4-eq8}), 
and as a consequence of the functional equation, the ratio
also exists.

In other words, for the parametrized uniformly absolutely 
convergent series of $Z(s, x)$ defined by (\ref{sec4-eq5}) and for any
nontrivial zero $s_0$, we have the identity:

\begin{eqnarray}
  \lim\limits_{s \to s_0}\lim\limits_{x \to 1}\left|\frac{Z(s, x)}{Z(1-s, x)}\right|&=& \lim\limits_{x \to 1}\lim\limits_{s \to s_0}\left|\frac{Z(s, x)}{Z(1-s,
  x)}\right|=\left|\frac{s_0-1}{-s_0}\mathscr{R}(s_0)\right|.
  \label{sec4-eq14}
\end{eqnarray}

This identity asserts the existence and finiteness of the repeated
limits \emph{provided all the involved limits exist}. The
existence of the involved limits is the main reason for
considering the absolute value of the quotient. Without absolute
values, the limits may not exist for some particular values of $s$
because the ratio is a complex-valued function.

Identity (\ref{sec4-eq14}) should also hold if we rewrite $Z(s, x)$ using a
different analytic expression. Indeed, uniform absolute-convergence implies uniform convergence, and absolute-convergence is independent of the
ordering of a series. The parametrized series will  remain
uniformly convergent no matter how the order of the terms are changed or rearranged. Other expressions can for example be obtained by rearranging terms, power series expansion around a regular point in the domain of analyticity, the sum of a regular part and a multi-valued (or principal) part, expansion in powers of
$\log{x}$ etc.  In the next section, we will make use of this fact using the analytic expression (\ref{sec3-eq30}) given in Theorem~\ref{sec3-thm3}.

\subsection{The Riemann Hypothesis}\label{subsec4_2}

\noindent Let $s_0=\sigma_0+i\omega_0$, $0<\re(s_0)<1$ be a fixed nontrivial
zero of $\zeta(s)$ and consider the parametrized zeta function $Z(s_0,x)$ defined by the series (\ref{sec4-eq5}).  Theorem~\ref{sec3-thm3} permits to express the series as

\begin{align}\label{sec4-eq15}
Z(s_0,x)&=\int_{x}^{1}\Li_{s_0}\left( \frac{z}{z-1}\right)\,dz,
\end{align}

since $(s_0-1)\zeta(s_0)=0$. Replacing the last expression and the corresponding one for the zero $1-s_0$ into the middle of equation (\ref{sec4-eq14}) which asserts the existence and
finiteness of the repeated limits, we get

\begin{align}\label{sec4-eq16}
\lim\limits_{x \to 1}\frac{\abs{\int_{x}^{1}\Li_{s_0}\left( \frac{z}{z-1}\right)\,dz}}
{\abs{\int_{x}^{1}\Li_{1-s_0}\left( \frac{z}{z-1}\right)\,dz}}&=\abs{
\mathscr{R}(s_0)\frac{s_0-1}{-s_0}}.
\end{align}

Now, using the asymptotic estimate (\ref{sec3-eq30}) of Theorem~\ref{sec3-thm3}, the last limit can rewritten as

\begin{align}\label{sec4-eq17}
\abs{\frac{(1-s_0)\Gamma(1-s_0)}{
s_0\Gamma(s_0)}}\lim\limits_{x
\to 1}\left[- \log\left(\frac{1}{x}-1\right)\right]^{2\sigma_0-1}=\abs{
\mathscr{R}(s_0)\frac{s_0-1}{-s_0}}.
\end{align}

This limit always exists but its value depends of $\sigma_0$:

\begin{equation}\label{sec4-eq18}
    \lim\limits_{x
\to 1}\left[- \log\left(\frac{1}{x}-1\right)\right]^{2\sigma_0-1} =
\begin{cases} 0 &\mbox{if } \sigma_0 < \frac{1}{2} \\
1 & \mbox{if } \sigma_0 = \frac{1}{2}\\
\infty & \mbox{if } \sigma_0 > \frac{1}{2}.
\end{cases}
\end{equation}

However, for the limit to be consistent with  (\ref{sec4-eq17}) even if we switch the role of $\zeta(s)$ and $\zeta(1-s)$ in the ratio, $\sigma_0$ has to be equal to $\frac{1}{2}$. This is nothing but the Riemann Hypothesis.

\section{What About Other Zeta Series and Parametrizations?}\label{sec5}
 \noindent

In this section we examine whether any parametrization of the zeta
function leads to the same results. The  elementary argument used in the previous section is uniquely based on the fact that the ratio
$\tfrac{\zeta(s)}{\zeta(1-s)}=\mathscr{R}(s)$ is a continuous
function of $s$ and that it has a finite value at any non-trivial
zero $s_0$. Therefore, the reasoning  should be independent of the
parametrization $\zeta(s)$ as long as the  parametrized
zeta function $Z(s,x)$ possess some specific properties. The properties of $Z(s,x)$ that were used in the proof in the previous section are the following:

\begin{description}
\item[A1] $Z(s,x)$ is a uniformly absolutely convergent parametrization of an absolutely uniformly convergent (in the critical strip) series  $(s-1)\zeta(s)$ of the form
\begin{align}
(s-1)\zeta(s)=&\sum_{n=1}^{\infty}u(n,s),\label{sec5-eq1}
\end{align}
  where $u(n,s)$  is a well-defined
holomorphic function whose restriction to the real numbers is
real.

\item[A2] $x=1$ is the unique finite singular point of $Z(s,x)$.
\end{description}

Assumption (A1) regarding uniform absolute-convergence is fundamental if we
wish to conserve at least pointwise and uniform convergence since any
non-absolutely convergent series can potentially be reordered into a
non-uniformly convergent series, or a series which does not even
converge pointwise. In addition,  with (A1) $\lim\limits_{x
\to 1}Z(s,x)=(s-1)\zeta(s)$ and $u(n,\overline{s}) = \overline{u(n,s)}$ so that $(s-1)\zeta(s)$ is real when $s$ is real.

Assumption (A2) is less fundamental but requiring it will eliminate series that are regular at $x=1$ such as Knopp's series above. When $x=1$ is not singular, uninteresting conclusions will arise as we will see in another example at the end of this section (see Example~\ref{sec5-example2}). To simplify the analysis and to avoid unnecessary generalizations, we continue with  Abel type  parametrizations only:

\begin{align}
Z(s,x)=&\sum_{n=1}^{\infty}u(n,s)x^{n},\label{sec5-eq2}
\end{align}

With such parametrizations, assumption (A1) can be easily checked and Assumption (A2) can for example be satisfied using the general sufficient growth conditions considered  by Lindel{\"o}f in \cite[pp. 132]{lindelof:1905}. In addition, with 
(A2) and for fixed $s$, we must have
\begin{align}
\lim\limits_{n
\to \infty}\frac{u(n,s)}{u(n+1,s)}=1.\label{sec5-eq3}
\end{align}

There exists  other series than the series subject of this paper that analytically continue the original series (\ref{sec1-eq1}) to the whole complex plane. A summary of the relevant formulas associated with these series is illustrated in Table~\ref{sec5-table1}. Note that the Dirichlet eta function $\eta(s)=(1-2^{1-s})\zeta(s)=\sum_{n=1}^{\infty}\frac{(-1)^{n-1}}{n^s}$
is not mentioned since it is not an absolutely convergent series when $\re(s)<1$. Note also that the series in the  third row is very recent \cite{blagouchine}, and in the same paper, the author also proves that Hasse's and Ser's series are a rearrangement of each other.

All the  series of Table~\ref{sec5-table1} are defined for all $s\in \mathbb{C}$ and  are uniformly and absolutely convergent series on compact sets of the $s$ plane. Moreover, the rate of convergence of the  different series  can be roughly estimated using the asymptotic order of growth of each particular term of the series, denoted by $u_n$ using the asymptotic estimate of $S_n(s)$ proved in 
Section~\ref{sec2} and the estimates ${G_{n}\thicksim \frac{1}{n(\log{n})^2}}$ and ${C_{n}\thicksim \frac{1}{\log{n}}}$. The numbers $G_n, C_n$ are known respectively as Gregory coefficients\footnote{Gregory coefficients can  also be expressed as $G_n=\frac{1}{n!}\int_{0}^{1}t(1-t)\cdots(n-1-t)\,dt$.}  and Cauchy numbers of the second kind. The asymptotic estimates of these numbers can be easily deduced from the following generating functions of the $G_n$ and $C_n$ using the methods of \cite{flajolet:singularities}:
\begin{align}\label{sec5-eq4}
\frac{1}{z}+\frac{1}{\log(1-z)}&=\sum_{n=1}^{\infty}G_nz^{n},\\
-\frac{1}{z}-\frac{1}{(1-z)\log(1-z)}&=\sum_{n=1}^{\infty}C_nz^{n-1}.\label{sec5-eq5}
\end{align}

\vspace{1mm}

\begin{table}[!ht]
\centering
\begin{tabular}{lcllll} 
\toprule
{\bf Name }  & {\bf Series } & && {\bf Convergence  }   & {\bf Point }\\
             &               && &{\bf Rate  }  &{\bf  $x=1$}\\
\midrule
Knopp\cite{sondow}&$\displaystyle{(1-2^{1-s})\zeta(s)}$&=$\displaystyle{\sum_{n=0}^{\infty}\frac{1}{2^{n+1}}S_{n+1}(s)}$&${u_{n}\thicksim}$&
${ \frac{(\log{n})^{s-1}}{2^{n}n\Gamma(s)}}$&regular\\
Ser\cite{ser}&$\displaystyle{\zeta(s)}$&=$\displaystyle{\frac{1}{s-1}+\sum_{n=1}^{\infty}G_n S_n(s)}$&${u_{n}\thicksim }$&${\frac{(\log{n})^{s-3}}{n^2\Gamma(s)}}$&singular\\
Hasse\cite{hasse}&$\displaystyle{(s-1)\zeta(s)}$&=$\displaystyle{\sum_{n=0}^{\infty}\frac{1}{n+1}S_{n+1}(s-1)}$&&&\\
&&=$\displaystyle{\sum_{n=1}^{\infty}\frac{1}{n}S_{n}(s-1)}$&${u_{n}\thicksim}$&${ \frac{(\log{n})^{s-2}}{n^2\Gamma(s-1)}} $&singular\\
Blagouchine\cite{blagouchine}&$\displaystyle{\zeta(s)}$&=$\displaystyle{\frac{s}{s-1}-\sum_{n=1}^{\infty}C_n\tilde{S}_{n}(s)}$&${u_{n}\thicksim}$&${ \frac{(\log{n})^{s-2}}{n^2\Gamma(s)}} $&singular\\
This paper\cite{lazhar:zeros}&$\displaystyle{(s-1)\zeta(s)}$&=$\displaystyle{\sum_{n=1}^{\infty}\frac{1}{n+1}S_{n}(s)}$&&\\
&&=$\displaystyle{-\sum_{n=1}^{\infty}\frac{1}{n(n+1)}\Delta_{n}(s)}$&
${u_{n}\thicksim}$&${ \frac{(\log{n})^{s-1}}{n(n+1)\Gamma(s)}}$&singular\\
\bottomrule
\end{tabular}
\caption{Abel Parametrization $\sum_{n=0}^{\infty}u(n,s)x^{n}$ of Different Series of $\zeta(s)$. $S_n(s)=\sum_{k=0}^{n-1}(-1)^{k}\binom{n-1}{k}(k+1)^{-s}$, $\tilde{S}_{n}(s)=\sum_{k=0}^{n-1}(-1)^{k}\binom{n-1}{k}(k+2)^{-s}$, $\Delta_n(s)=\sum_{k=1}^{n}\binom{n}{k}(-1)^{k}k^{1-s}$and $u_n\triangleq u(n,s)$.}
\label{sec5-table1}
\end{table}

When $s$ is in the critical strip, the fastest converging series is Knopp's series,  followed by Ser's
series, followed by both hasse and Blagouchine's series which possess equal convergence rates, and followed lastly by this paper's series. Apart from  Knopp's series whose Abel parametrization  violates assumption (A2), the point $x=1$ is a singular point for all the remaining Abel parametrizations. 
The high convergence rate of Ser's series, Hasse's series and Blagouchine's series did not allow the use the method of \cite{lazhar:zeros} which is essentially based on the fact that the first derivative with respect to the parameter $x$ of the parametrized series diverges at the critical point $x=1$. That is, although not advantageous computationally, the slow convergence rate of this paper's series was in some respects very handy in \cite{lazhar:zeros}.

Let's now suppose  that we have managed to express our parametrized zeta function (\ref{sec5-eq2}) in a neighborhood of the singular point $x=1$ by

\begin{align}\label{sec5-eq6}
Z(s,x)=(s-1)\zeta(s)+a(s)(x-1)^{m(s)}[1+\cdots],
\end{align}

where $a(s), m(s)$ are continuous functions of $s$, being real for
$s$ real\footnote{Note that the reasoning does not only concern
expansions of the form (\ref{sec5-eq6}) but also general
expansions that contain algebraic and a finite number of nested
logarithmic factors of the form
\begin{align*}
&Z(s,x)=(s-1)\zeta(s)+
a(s)(1-x)^{m_0(s)}\left(\log\frac{1}{1-x}\right)^{m_1(s)}\cdots
\left(\log\log\ldots\log\frac{1}{1-x}\right)^{m_p(s)}[1+\cdots].
\end{align*}
}. Using the same arguments of the previous section, we
conclude that for a nontrivial zero $s_0$, we must have

\begin{eqnarray}\label{sec5-eq7}
\left |\frac{a(s_0)}{a(1-s_0)}\right|&=&\left |\frac{s_0-1}{-s_0}\mathscr{R}(s_0)\right| \text{and}\\
\re(m(s_0))&=&\re(m(1-s_0)).\label{sec5-eq8}
\end{eqnarray}

In other words, in addition to the obvious condition $0<\re(s_0)<1$, the
functional equation imposes the two extra necessary conditions
(\ref{sec5-eq7})-(\ref{sec5-eq8}). Using the particular parametrization of  the previous section, the solution set  of the two extra 
conditions was the critical line $\re(s_0)=\frac{1}{2}$ which translates into the Riemann Hypothesis. 

To solve (\ref{sec5-eq7})-(\ref{sec5-eq8}) for a general parametrization, we start by narrowing the solution set. Firstly, note that if  $m(s)$ (resp. $a(s)$) is a well-defined
holomorphic function whose restriction to the real numbers is
real-valued, then $m(\overline{s}) = \overline{m(s)}$ (resp.
$a(\overline{s}) = \overline{a(s)}$). Secondly, we  know that for all
$s_0$ on the critical line $\overline{s_0}=1-s_0$; thus, the
critical line is always part of the solution set defined by equations
(\ref{sec5-eq7})-(\ref{sec5-eq8}). This leads to the following question:

\noindent{\bf Question:} Do parametrized zeta functions satisfying (A1)-(A2) exist such that the whole critical line
is only part of a larger solution set imposed by conditions (\ref{sec5-eq7})-(\ref{sec5-eq8})?

We cannot answer such a question in its generality but we know that the answer is  negative if we initially start with a valid zeta series such as the one described in this paper. However, we can find artificially constructed parametrized zeta functions (not zeta series) which provide a positive answer to the above question. We believe that such constructions\footnote{Note that all the artificial constructions have to satisfy the equality on the interchange of repeated limits.} are not admissible in the sense that they do not originate from a globally (or at least in the critical strip) convergent series of $\zeta(s)$. We finish this section with three examples. The first example depicts an artificially constructed parametrization, the second example illustrates a parametrization  that violates assumption (A2), and the third example reproduces the parametrization of \cite{lazhar:zeros}.

\begin{example}\label{sec5-example1}
In this example, we have an artificial parametrization such that
(\ref{sec5-eq7})-(\ref{sec5-eq8}) have the critical line as a solution and other additional solutions off the critical line. Consider the following parametrized perturbation of
$\zeta(s)$:
\begin{align}\label{sec5-eq9}
Z(s,x)=(s-1)\zeta(s)+(x-1)^{s^3}.
\end{align}

Equation (\ref{sec4-eq1}) is trivially satisfied, and with
 $s_0=\sigma_0+i\omega_0$, the solution to
\begin{align}\label{sec5-eq10}
&\re(s_0^3)=\re((1-s_0)^3)
\end{align}

is the union of the vertical line $\sigma_0=\frac{1}{2}$ and the
curve in the $(\sigma_0,\omega_0)$-plane defined by

\begin{align}\label{sec5-eq11}
& 3\omega_0^2=\sigma_0^2-\sigma_0+1.
\end{align}

The curve is a vertical hyperbola centered at $(\frac{1}{2},0)$.
Such a parametrization contains an infinite
number of points off the critical line none of which are solutions
to $\zeta(s)=0$.

In appearance this parametrization is useful. It tells  that the
non-trivial zeros of $\zeta(s)$ have to be either on the hyperbola
or on the critical line. It reduced the solution set  of
$\zeta(s)=0$ from a vertical strip to a union of two segments of a
hyperbola (the endpoints of the segments are $(0,\pm
\frac{1}{\sqrt{3}})$ and $(1,\pm \frac{1}{\sqrt{3}})$) and the
critical line. It has reduced the range of solutions of
$\zeta(s)=0$.

One may be tempted to reproduce the proof of Section~\ref{subsec4_2}
using this parametrization. Indeed, the critical line is already part of the set where
the solutions of $\zeta(s)$ must reside. The other set consists of
the two segments of the hyperbola. But since the first zeros of
$\zeta(s)$ are at an algebraic height of $\pm 14.13$, the two
segments of the hyperbola cannot contain any zero of $\zeta(s)$.
In other words, such a parametrization  would imply the Riemann
hypothesis. However, the attempted proof is untenable because we started with
a hypothetical artificial parametrization. There is no reason to
parametrize $\zeta(s)$ in the form (\ref{sec5-eq9}): it is not the result of well-defined series of $\zeta(s)$. And, any conclusion is true if the starting premise is false.

\end{example}

\begin{example}\label{sec5-example2}
In this example $x=1$ is a regular point and assumption (A2) is violated. We have a parametrization where the range of search of
the solution of $\zeta(s)=0$ is not reduced from the critical strip. 

Let $Z(s,x)=(s-1)\zeta(s,x)$ be our parametrization, where
$\zeta(s,x)$ is now the well-known Hurwitz zeta function. We use the power series expansion of 
$\zeta(s,x)$ around the point
$x=0$ given in \cite{klusch:lerch}:

\begin{equation}\label{sec5-eq12}
\zeta(s,a-t)=\sum_{n=0}^{\infty}\frac{\Gamma(s+n)}{\Gamma(s)n!}\zeta(s+n,a)t^n;
|t|<a.
\end{equation}

Choosing $a=1$ and writing $t=1-x$ leads to the desingularized
expansion

\begin{equation}\label{sec5-eq13}
(s-1)\zeta(s,x)=(s-1)\zeta(s)+s(s-1)\zeta(s+1)(1-x)[1+\cdots].
\end{equation}

For a nontrivial zero $s_0$ we must have

\begin{equation}\label{sec5-eq14}
\left |\frac{\zeta(s_0+1)}{\zeta(2-s_0)}\right|=\left
|\frac{s_0-1}{-s_0}\mathscr{R}(s_0)\right|.
\end{equation}

The exponent of $x-1$ in this expansion is 1 and equation
(\ref{sec5-eq7}) is trivially satisfied. Any non-trivial zero must
be a solution to equation (\ref{sec5-eq8}). It is  easy to verify
that any complex number $s_0$ with real part equal to
$\frac{1}{2}$ is a solution to (\ref{sec5-eq8}). But are there
other solutions to the same equation? We don't know. Hence, in some sense, 
expansion (\ref{sec5-eq13}) of the Hurwitz zeta function
is not very interesting: it only provides the finiteness of the
ratio $\left |\tfrac{\zeta(s_0+1)}{\zeta(2-s_0)}\right|$ which is
useful in itself but does not give a qualitative characterization
of the real part of $s_0$. The same conclusion applies to the parametrization of Knopp's series in 
Table~\ref{sec5-table1}.
\end{example}

\begin{example}\label{sec5-example3}
As a final example, we choose $Z_1(s,x)$ from
\cite{lazhar:zeros} defined by $\frac{Z(s,x)}{x}$: it is the parametrization of this paper but with the powers of $x$ reduced by $1$. Using a method very different from the method of this paper, the author obtained the following expansion \cite{lazhar:zeros}:

\begin{align}\label{sec5-eq15}
Z_1(s,x)&=(s-1)\zeta(s)-\frac{1}{\Gamma(s+1)}(1-x)\bigg(\frac{\log(1-x)}{-x}\bigg)^{s}[1+\cdots].
\end{align}

For a nontrivial zero $s_0$, we must have

\begin{eqnarray}\label{sec5-eq16}
\left |\frac{\Gamma(2-s_0)}{\Gamma(s_0+1)}\right|&=&\left |\frac{s_0-1}{-s_0}\mathscr{R}(s_0)\right| \text{and}\\
\re{(s_0)}&=&\re{(1-s_0)}.
\end{eqnarray}

Like the parametrization
(\ref{sec4-eq5}), parametrization (\ref{sec5-eq15}) is optimal in a sense that it
helped us reduce the range of solutions from the critical strip to
the critical line.

\end{example}

\section{Conclusion}\label{sec6}

 \noindent

We showed the existence of a useful new parametrized uniformly globally convergent series of the Riemann zeta function. As an application of the new parametrized series, we derived necessary conditions for a non-trivial zero of $\zeta(s)$. These necessary conditions are of a qualitative rather than the usual quantitative type. The Riemann hypothesis itself is of a qualitative character. By adding an extra parameter to the zeta function, information on the location of the zeros can be obtained without having to calculate these zeros. 

Other parametrizations of the zeta function may also be useful in determining
other necessary conditions for the non-trivial zeros. It is however important  to start with a legitimate and admissible parametrization because artificially constructed  parametrizations can be useless and may lead to the wrong conclusions.  For instance, further investigations of the known zeta series from Table~\ref{sec5-table1} may lead to interesting results.

Finally, it would be very useful and inspiring to apply and to  extend the algorithm described in \cite{vepstas} in order to graphically analyze the behavior of the parametrized zeta function described in this paper.

\end{document}